\newcommand{\Tra}{{\sf T}}
\newcommand{\Real}{\mathbb{R}}
\newcommand{\qtext}[1]{\quad\text{#1}\quad}
\newcommand{\V}[2][]{{\bm{#1 \mathbf{\MakeLowercase{#2}}}}} 
\newcommand{\M}[2][]{{\bm{#1 \mathbf{\MakeUppercase{#2}}}}} 
\newcommand{\T}[2][]{\boldsymbol{#1 \mathscr{\MakeUppercase{#2}}}} 
\newcommand{\TE}[3][]{#1 \MakeLowercase{#2}_{#3}}
\newcommand{\Rn}{\Real^n}
\newcommand{\Smn}[1][m,n]{\mathbb{S}^{[#1]}}
\newcommand{\TA}{\T{A}}
\newcommand{\TB}{\T{B}}
\newcommand{\Vy}{\V{y}}
\newcommand{\Vx}{\V{x}}
\newcommand{\SAxm}{\TA\Vx^m}
\newcommand{\VAxm}{\TA\Vx^{m-1}}
\newcommand{\MAxm}{\TA\Vx^{m-2}}
\newcommand{\SBxm}{\TB\Vx^m}
\newcommand{\VBxm}{\TB\Vx^{m-1}}
\newcommand{\MBxm}{\TB\Vx^{m-2}}
\newcommand{\VExm}{\T{E}\Vx^{m-1}}
\newcommand{\MExm}{\T{E}\Vx^{m-2}}
\newcommand{\Mxxt}{\V{x}\V{x}^{\Tra}}
\newcommand{\Msop}[2]{#1 \circledcirc #2}
\newcommand{\Snxm}[1][m]{\|\Vx\|^{#1}}
\newcommand{\Sfx}[1][(\Vx)]{f#1}
\newcommand{\Sfhatx}[1][(\Vx)]{\hat f#1}
\newcommand{\Vgx}[1][(\Vx)]{\V{g}#1}
\newcommand{\Vghatx}[1][(\Vx)]{\V[\hat]{g}#1}
\newcommand{\MHx}[1][(\Vx)]{\M{H}#1}
\newcommand{\MHhatx}[1][(\Vx)]{\M[\hat]{H}#1}
\newcommand{\ParensX}{(\Vx)}
\newcommand{\FuncA}{f_1\ParensX}
\newcommand{\FuncB}{f_2\ParensX}
\newcommand{\FuncC}{f_3\ParensX}
\newcommand{\GradA}{\V{g}_1\ParensX}
\newcommand{\GradB}{\V{g}_2\ParensX}
\newcommand{\GradC}{\V{g}_3\ParensX}
\newcommand{\HessA}{\M{H}_1\ParensX}
\newcommand{\HessB}{\M{H}_2\ParensX}
\newcommand{\HessC}{\M{H}_3\ParensX}
\newtheorem{Example}[theorem]{Example}
\newenvironment{example}{\begin{Example}\rm}{~~$\square$\end{Example}}
\newenvironment{exampleplusname}[1]{\begin{Example}[#1]\rm}{~~$\square$\end{Example}}
\newcolumntype{R}{>{$}r<{$}}  %
\newcolumntype{V}[1]{>{[\;}*{#1}{R@{\;\;}}R<{\;]}} %
\newcolumntype{E}[1]{*{#1}{R@{,\;}}R} %
\newcounter{dimnmo}
\newcounter{dimn}
      \multicolumn{\thedimn}{c|}{$\Vx$} & 
      \multicolumn{\thedimn}{c|}{$\Vx$} & 
\newcommand{\EpHeader}[2]{%
  \multicolumn{1}{|c|}{$\lambda$} &
  \multicolumn{#1}{c|}{$\Vx^{\Tra}$} &
  \multicolumn{#2}{c|}{$\M{C}(\lambda,\Vx)$ evals.} &
  \multicolumn{1}{c|}{type}
}  
\newcommand{\Sec}[1]{\hyperref[sec:#1]{\S\ref*{sec:#1}}} %
\newcommand{\App}[1]{\hyperref[sec:#1]{Appendix~\ref*{sec:#1}}} %
\newcommand{\Eqn}[1]{\hyperref[eq:#1]{(\ref*{eq:#1})}} %
\newcommand{\Part}[1]{\hyperref[#1]{(\ref*{#1})}} %
\newcommand{\Fig}[1]{\hyperref[fig:#1]{Figure~\ref*{fig:#1}}} %
\newcommand{\Tab}[1]{\hyperref[tab:#1]{Table~\ref*{tab:#1}}} %
\newcommand{\Thm}[1]{\hyperref[thm:#1]{Theorem~\ref*{thm:#1}}} %
\newcommand{\Lem}[1]{\hyperref[lem:#1]{Lemma~\ref*{lem:#1}}} %
\newcommand{\Prop}[1]{\hyperref[prop:#1]{Property~\ref*{prop:#1}}} %
\newcommand{\Cor}[1]{\hyperref[cor:#1]{Corollary~\ref*{cor:#1}}} %
\newcommand{\Def}[1]{\hyperref[def:#1]{Definition~\ref*{def:#1}}} %
\newcommand{\Alg}[1]{\hyperref[alg:#1]{Algorithm~\ref*{alg:#1}}} %
\newcommand{\Ex}[1]{\hyperref[ex:#1]{Example~\ref*{ex:#1}}} %
\begin{document}
 
\title{An Adaptive Shifted Power Method for Computing Generalized Tensor Eigenpairs%
  \thanks{This work was funded by the applied mathematics program at
    the U.S.  Department of Energy and by an Excellence Award from the
    Laboratory Directed Research \& Development (LDRD) program at
    Sandia National Laboratories. Sandia National Laboratories is a
    multiprogram laboratory operated by Sandia Corporation, a wholly
    owned subsidiary of Lockheed Martin Corporation, for the United
    States Department of Energy's National Nuclear Security
    Administration under contract DE-AC04-94AL85000.}}

\author{Tamara G. Kolda\footnotemark[2] \and Jackson R. Mayo\footnotemark[2]} 
\maketitle

\renewcommand{\thefootnote}{\fnsymbol{footnote}}
\footnotetext[2]{Sandia National Laboratories, Livermore, CA. Email: \{tgkolda,jmayo\}@sandia.gov.}
\renewcommand{\thefootnote}{\arabic{footnote}}

\begin{abstract}
  Several tensor eigenpair definitions have been put forth in the past
  decade, but these can all be unified under generalized tensor
  eigenpair framework, introduced by Chang, Pearson, and Zhang (2009).
  Given $m$th-order, $n$-dimensional real-valued symmetric tensors
  $\TA$ and $\TB$, the goal is to find $\lambda \in \Real$ and $\Vx
  \in \Rn, \Vx \neq 0$ such that $\VAxm = \lambda \VBxm$. Different
  choices for $\TB$ yield different versions of the tensor eigenvalue
  problem. We present our generalized eigenproblem adaptive power
  (GEAP) method for solving the problem, which is an extension
  of the shifted symmetric higher-order power method (SS-HOPM) for
  finding Z-eigenpairs. A major drawback of SS-HOPM was that its
  performance depended in choosing an appropriate shift, but our GEAP
  method also includes an adaptive method for choosing the shift
  automatically.
\end{abstract}
 
\begin{keywords}
  tensor eigenvalues, E-eigenpairs, Z-eigenpairs, $l^2$-eigenpairs,
  generalized tensor eigenpairs,
  shifted symmetric higher-order power method (SS-HOPM),
  generalized  eigenproblem adaptive power (GEAP) method
\end{keywords}
\begin{AMS}
  15A18, %
  15A69  %
\end{AMS}

\pagestyle{myheadings}
\thispagestyle{plain}
\markboth{\sc T.~G.~Kolda and J.~R.~Mayo}%
{\sc Adaptive Shifted Power Method for Computing Generalized Tensor Eigenpairs}

\section{Introduction} 
\label{sec:introduction} 
Suppose $\TA$ is a real-valued, $m$th-order, $n$-dimensional tensors and
$\Vx$ is a real-valued $n$-vector. We let $\VAxm$ denote the
$n$-vector defined by
\begin{displaymath}
  (\VAxm)_{i_1} = 
  \sum_{i_2 = 1}^n \cdots \sum_{i_m=1}^n a_{i_1 \dots i_m} x_{i_2} \cdots x_{i_m}
  \qtext{for} i_1 = 1,\dots,n.
\end{displaymath}
We let $\SAxm$ denote the scalar defined by $\SAxm = \Vx^\Tra
(\VAxm)$. We say the tensor $\TA$ is \emph{symmetric} if its entries
are invariant under permutation. We say the tensor $\TA$ is
\emph{positive definite} if $\SAxm > 0$ for all $\Vx \neq 0$. 

The notion of generalized eigenpairs has been defined for tensors by
Chang, Pearson, and Zhang \cite{ChPeZh09} as follows.  Let $\TA$ and
$\TB$ be real-valued, $m$th-order, $n$-dimensional symmetric tensors.
Assume further that $m$ is even and $\TB$ is positive definite.  We
say $(\lambda,\Vx) \in \Real \times \{ \Rn \setminus \set{\V{0}}\}$ is
a \emph{generalized eigenpair} (also known as a
\emph{$\TB$-eigenpair}) if
\begin{equation}
  \label{eq:gtep}
  \VAxm = \lambda \VBxm.
\end{equation}
Taking the dot product with $\Vx$, it is clear that any solution
satisfies
\begin{equation}
  \label{eq:lambda}
  \lambda = \frac{\SAxm}{\SBxm}.
\end{equation}

The advantage of the generalized eigenpair framework is that it nicely
encapsulates multiple definitions of tensor eigenvalues, as follows.
\begin{asparaitem}
  
  \item A \emph{Z-eigenpair} \cite{Qi05,Li05} is defined as a pair 
  $(\lambda,\Vx) \in \Real \times \Rn$  
  such that
  \begin{equation}
  \label{eq:ztep}
  \VAxm = \lambda \Vx \qtext{and} \Snxm[] = 1.
  \end{equation}
  This is equivalent to a generalized tensor eigenpair with $\TB = \T{E}$, 
  the identity tensor such that $\VExm   = \Snxm[m-2] \Vx$   
  for all $\Vx \in \Rn$ \cite{ChPeZh09}. 
  Note that, unlike ordinary tensor Z-eigenpairs, generalized tensor eigenpairs   allow arbitrary rescaling of the eigenvector $\Vx$ with no effect on the   eigenvalue $\lambda$. In this way, the generalized tensor eigenvalue problem   preserves the homogeneity of the corresponding matrix eigenproblem.
  
  \item
  An \emph{H-eigenpair} is defined as a pair $(\lambda,\Vx) \in \Real \times \{ \Rn \setminus \set{\V{0}}\}$ such that
  \begin{equation}
  \label{eq:htep}
  \VAxm = \lambda \Vx^{[m-1]}.
  \end{equation}
  Here $\Vx^{[m-1]}$ denotes elementwise power, i.e., $(\Vx^{[m-1]})_i
  \equiv \Vx_i^{m-1}$, for $i=1,\dots,n$.
  This is equivalent to a generalized tensor eigenpair with $b_{i_1i_2\dots i_m} = \delta_{i_1i_2\dots i_m}$ \cite{ChPeZh09}.
  
  \item Let $\M{D}$ be a symmetric $n \times n$ matrix and assume $m=4$.
  We say $(\lambda, \Vx)$ is a \emph{D-eigenpair} \cite{QiWaWu08} if
  \begin{displaymath}
  \VAxm = \lambda \M{D}\Vx \qtext{and} \Vx^{\Tra}\M{D}\Vx = 1.
  \end{displaymath}
  This is equivalent to a $\TB$-eigenpair where $\TB$ is the symmetrized 
  tensor outer product of $\M{D}$ with itself \cite{ChPeZh09}.
\end{asparaitem}

In this paper, we describe a method for computing generalized eigenpairs. Our
method is a generalization of the shifted symmetric higher-order power method
(SS-HOPM) that we previously introduced for computing Z-eigenvalues
\cite{KoMa11}. In addition to
generalizing the method, we have also 
significantly improved it by adding an adaptive method for
choosing the shift. To derive the method, we reformulate the generalized eigenproblem, 
\Eqn{gtep}, as a  nonlinear program
such that any generalized eigenpair is equivalent to a KKT point 
in \Sec{formulation}. We develop 
an adaptive, monotonically convergent, shifted power method for solving the 
optimization problem in \Sec{derivation}. We call our method the 
Generalized Eigenproblem Adaptive Power (GEAP) method.
In \Sec{experiments}, 
we show that the GEAP method is much faster than the SS-HOPM method 
for finding Z-eigenpairs due to its adaptive shift selection.
Furthermore, the GEAP method is shown to find other types of generalized 
eigenpairs, by illustrating it on examples from related literature 
as well as a randomly generated example. 
This is the only known method for finding generalized eigenpairs 
besides direct numerical solution; we survey related work in \Sec{related}.

\section{Notation and preliminaries}
\label{sec:notation}  

A symmetric tensor has entries that are invariant under any permutation of its
indices. More formally, a real-valued, $m$th-order, $n$-dimensional tensor $\TA$
is \emph{symmetric} if
\begin{displaymath}
  \TE{a}{i_{p(1)} \cdots i_{p(m)}} = \TE{a}{i_1 \cdots i_m}
  \qtext{for all} i_1, \dots, i_m \in \{1,\dots,n\}
  \qtext{and} p \in \Pi_{m},
\end{displaymath}
where $\Pi_m$ denotes the space of all $m$-permutations. We let $\Smn$ denote
the space of all symmetric, real-valued, $m$th-order, $n$-dimensional tensors.

Let $\TA \in \Smn$, then we can define the following tensor-vector products.
\begin{align}
  \label{eq:Axm}
  \SAxm &= \sum_{i_1=1}^n \cdots \sum_{i_m=1}^n 
  a_{i_1 i_2 \dots i_m} x_{i_1} \cdots x_{i_m}, \\
  \label{eq:Axm1}
  (\VAxm)_{i_1} &= \sum_{i_2=1}^n \cdots \sum_{i_m=1}^n 
  a_{i_1 i_2 \dots i_m} x_{i_2} \cdots x_{i_m} 
  & \text{for all } & i_1 = 1,\dots,n,\\
  \label{eq:Amx2}
  (\MAxm)_{i_1i_2} &= \sum_{i_3=1}^n \cdots \sum_{i_m=1}^n 
  a_{i_1 i_2 \dots i_m} x_{i_3} \cdots x_{i_m} 
  & \text{for all } & i_1,i_2 = 1,\dots,n. 
\end{align}
Observe that the derivatives of the tensor-vector product w.r.t.\@ $\Vx$ are given by
\begin{align*}
  \nabla (\SAxm) &= m \VAxm, &
  \nabla^2 (\SAxm) &= m(m-1) \MAxm. 
\end{align*}
We say a tensor $\TA \in \Smn$ is \emph{positive definite} if
\begin{displaymath}
\SAxm > 0 \qtext{for all} \Vx \in \Rn, \Vx \neq 0.
\end{displaymath}
We let $\Smn_+$ denote the space of positive definite tensors in
$\Smn$.

We use the symbol $\circledcirc$ to mean symmetrized outer product, i.e., 
\begin{displaymath}
\Msop{\V{a}}{\V{b}} = \V{a}\V{b}^{\Tra} + \V{b}\V{a}^{\Tra}.
\end{displaymath}

\section{Problem reformulation}
\label{sec:formulation}

Let $\Sigma$ denote the unit sphere, i.e., 
\begin{displaymath}
  \Sigma = \set{\Vx \in \Rn | \Snxm = 1 }.
\end{displaymath}
Let $\TA \in \Smn$ and $\TB \in \Smn_+$. Then we may define the
nonlinear program
\begin{equation}
\label{eq:opt}
\max \Sfx = \frac{\SAxm}{\SBxm} \Snxm
\qtext{subject to} \Vx \in \Sigma.
\end{equation}
The constraint makes the term $\Snxm$ in $\Sfx$ superfluous;
nevertheless, we retain this form since choosing $\TB = \T{E}$ yields
$f(x) = \SAxm$, as in \cite{KoMa11}.

The details of computing the derivatives are provided in
\App{derivatives}. Here we simply state the results as a theorem.
\begin{theorem}
Let $\Sfx$ be as defined in \Eqn{opt}. 
For $\Vx \in \Sigma$, the gradient is
\begin{equation}\label{eq:grad}
  \Vgx \equiv \nabla \Sfx 
  = \frac{m}{\SBxm} 
  \Biggl[
  \bigl( \SAxm \bigr) \; \Vx 
  \;+\; \VAxm
  \;-\; \biggl( \frac{\SAxm}{\SBxm} \biggr) \; \VBxm
  \Biggr].
\end{equation}
For $\Vx \in \Sigma$, the Hessian is
\begin{multline} \label{eq:hess}
  \MHx \equiv \nabla^2 \Sfx = 
  \frac{m^2 \SAxm}{(\SBxm)^3} \bigl( \Msop{\VBxm}{\VBxm} \bigr) \\
  + \frac{m}{\SBxm} 
  \biggl[
  (m-1) \MAxm 
  + \SAxm \bigl( \M{I} + (m-2) \Vx\Vx^{\Tra} \bigr) 
  + m \bigl( \Msop{\VAxm}{\Vx} \bigr)
  \biggr] \\
  - \frac{m}{(\SBxm)^2} 
  \biggl[
  (m-1) \SAxm \TB\Vx^{m-2}
  + m \bigl( \Msop{\VAxm}{\VBxm} \bigr) \\
  + m \SAxm \bigl( \Msop{\Vx}{\VBxm} \bigr)
  \biggr].
\end{multline}
\end{theorem}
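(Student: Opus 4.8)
The plan is to treat $\Sfx$ as an ordinary function on all of $\Rn$ --- writing it as the product of the three scalar factors $\SAxm$, $(\SBxm)^{-1}$, and $\Snxm$ --- and to differentiate it by the usual product and quotient rules, imposing the normalization $\Snxm[] = 1$ only \emph{after} the derivatives have been taken. The derivatives of the tensor--vector products are already recorded in \Sec{notation}, namely $\nabla(\SAxm) = m\,\VAxm$ and $\nabla^2(\SAxm) = m(m-1)\,\MAxm$, with the analogous formulas for $\TB$. The only remaining ingredient is the pair of derivatives of $\Snxm = (\Vx^{\Tra}\Vx)^{m/2}$, which a direct computation gives as
\begin{displaymath}
  \nabla \Snxm = m\,\Snxm[m-2]\,\Vx, \qquad
  \nabla^2 \Snxm = m\,\Snxm[m-2]\,\M{I} + m(m-2)\,\Snxm[m-4]\,\Mxxt.
\end{displaymath}

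For the gradient I would apply the quotient and product rules to obtain, for general $\Vx \neq 0$,
\begin{displaymath}
  \nabla \Sfx
  = \frac{\Snxm}{\SBxm}\,\nabla(\SAxm)
  - \frac{\SAxm\,\Snxm}{(\SBxm)^2}\,\nabla(\SBxm)
  + \frac{\SAxm}{\SBxm}\,\nabla \Snxm,
\end{displaymath}
and then substitute the three gradients above together with $\Snxm = 1$ and $\Snxm[m-2] = 1$. Collecting the common factor $m/(\SBxm)$ reproduces \Eqn{grad} directly.

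For the Hessian I would differentiate the \emph{unconstrained} gradient once more. Writing $f = u\,v\,w$ with $u = \SAxm$, $v = (\SBxm)^{-1}$, and $w = \Snxm$, the Hessian of a triple product expands into the three diagonal terms $vw\,\nabla^2 u + uw\,\nabla^2 v + uv\,\nabla^2 w$ together with three symmetric cross terms, one for each pair of factors, each of the form (remaining factor) times $\nabla p\,(\nabla q)^{\Tra} + \nabla q\,(\nabla p)^{\Tra}$. Two features make this manageable: first, $\nabla^2 v$ is supplied by a second application of the quotient rule, $\nabla^2 v = -(\SBxm)^{-2}\nabla^2(\SBxm) + 2(\SBxm)^{-3}\nabla(\SBxm)\,\nabla(\SBxm)^{\Tra}$; second, each symmetric cross term is exactly a symmetrized outer product $\Msop{\cdot}{\cdot}$, so terms such as $\nabla(\SAxm)\,(\nabla\Snxm)^{\Tra} + \nabla\Snxm\,(\nabla(\SAxm))^{\Tra}$ collapse to $m^2\,\Msop{\VAxm}{\Vx}$ after setting $\Snxm[m-2]=1$. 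Substituting $\Snxm[] = 1$ and grouping the resulting expressions by their power of $\SBxm$ --- those with $(\SBxm)^{-1}$, those with $(\SBxm)^{-2}$, and the single $(\SBxm)^{-3}$ term coming from the rank-one part of $\nabla^2 v$ --- yields the three bracketed blocks of \Eqn{hess}.

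The one point that requires care, and the main potential pitfall, is the order of operations: because the simplified forms of $\nabla \Sfx$ and of $\nabla^2\Snxm$ are valid only on $\Sigma$, the normalization must be imposed only after all differentiation is complete. In particular, the curvature term $m(\M{I} + (m-2)\Mxxt)$ in $\nabla^2\Snxm$ --- which supplies the $\SAxm(\M{I} + (m-2)\Mxxt)$ block --- would be lost if one differentiated the on-sphere gradient instead of the full gradient. Beyond that, the remaining work is the routine but lengthy bookkeeping of the six cross terms and their coefficients, which I would organize exactly along the grouping by powers of $\SBxm$ described above.
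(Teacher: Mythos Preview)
Your proposal is correct and essentially identical to the paper's own derivation in \App{derivatives}: the paper writes $f = f_1 f_2 / f_3$ with $f_1 = \SAxm$, $f_2 = \Snxm$, $f_3 = \SBxm$, computes the gradient and Hessian of this general quotient form (the cross terms appearing exactly as your symmetrized outer products $\Msop{\cdot}{\cdot}$), and then substitutes the individual derivatives and restricts to $\Sigma$. Your choice to absorb the denominator into a factor $v = (\SBxm)^{-1}$ and treat $f$ as a triple product is a cosmetic variant of the same computation, and you have correctly flagged the one genuine subtlety --- that the restriction $\Vx \in \Sigma$ must be imposed only after all differentiation is complete.
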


These complicated derivatives reduce for 
$\TB = \T{E}$. In that case, we have $\SBxm = 1$ and $\VBxm = \Vx$, so these equations become.
\begin{align*}
  \Vgx & = m \VAxm, &
  \MHx &= m(m-1) \MAxm.
\end{align*}
These match the derivatives of $f(x) = \SAxm$, as proved in
\cite{KoMa11}. Note that we have used the fact that $(m-1)\MExm =
\M{I} + (m-2) \Mxxt$ for all $\Vx \in \Sigma$.

We are considering the nonlinear program in \Eqn{opt} because there is
a correspondence between it and the generalized tensor eigenvalue
problem in \Eqn{gtep}. Note that the $\Vx$ in \Eqn{gtep} can be
arbitrarily rescaled.

\begin{theorem}
  Any pair $(\lambda,\Vx)$ is a solution to \Eqn{gtep} iff the scaled version with $\|\Vx\|=1$ is a KKT point of \Eqn{opt} with $\lambda$
  as the Lagrange multiplier.  
\end{theorem}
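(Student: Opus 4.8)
The plan is to prove both implications by reading them directly off the gradient formula \Eqn{grad}, so that the whole argument collapses to a short computation on the unit sphere. Throughout I would keep the constraint in the form $\Snxm = 1$ that defines $\Sigma$ (this is the same set as $\|\Vx\| = 1$); its gradient is $m\,\Snxm[m-2]\,\Vx$, which on $\Sigma$ equals $m\Vx$ and is nonzero since $\Vx \neq \V{0}$. Hence the linear independence constraint qualification holds, the KKT conditions are the correct first-order stationarity conditions, and the multiplier is unique. With this normalization of the constraint the multiplier comes out to be exactly $\lambda$; had I instead written the constraint as $\|\Vx\|^2 = 1$ the multiplier would merely be rescaled by the harmless constant $m/2$, so the choice $\Snxm = 1$ is what makes the statement read cleanly.

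For the forward direction, suppose $(\lambda,\Vx)$ solves \Eqn{gtep}. First I would observe that \Eqn{gtep} is invariant under rescaling of $\Vx$: since $\VAxm$ and $\VBxm$ are both homogeneous of degree $m-1$, replacing $\Vx$ by $\Vx/\|\Vx\|$ preserves the identity $\VAxm = \lambda\VBxm$ and fixes $\lambda$, so I may assume $\Vx \in \Sigma$. Substituting $\VAxm = \lambda\VBxm$ into \Eqn{grad} and using $\SAxm/\SBxm = \lambda$ from \Eqn{lambda}, the combination $\VAxm - (\SAxm/\SBxm)\VBxm$ vanishes, leaving $\Vgx = (m\,\SAxm/\SBxm)\,\Vx = m\lambda\Vx$. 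Since the constraint gradient on $\Sigma$ is $m\Vx$, this is precisely the statement that $\Vx$ is a KKT point of \Eqn{opt} with multiplier $\lambda$.

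For the reverse direction, suppose $\Vx \in \Sigma$ is a KKT point of \Eqn{opt} with multiplier $\lambda$, i.e.\ $\Vgx = m\lambda\Vx$. Setting \Eqn{grad} equal to $m\lambda\Vx$, clearing the factor $m/\SBxm$, abbreviating $\lambda_0 \equiv \SAxm/\SBxm$, and using $\SAxm = \lambda_0\SBxm$ to combine the $\Vx$ terms, I obtain
\begin{equation*}
  \VAxm - \lambda_0\VBxm = (\lambda - \lambda_0)\,\SBxm\,\Vx.
\end{equation*}
The key step is then to take the inner product with $\Vx$: since $\Vx^{\Tra}\VAxm = \SAxm$, $\Vx^{\Tra}\VBxm = \SBxm$, and $\|\Vx\| = 1$, the left-hand side becomes $\SAxm - \lambda_0\SBxm = 0$, forcing $(\lambda - \lambda_0)\SBxm = 0$. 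Because $\TB$ is positive definite and $\Vx \neq \V{0}$ we have $\SBxm > 0$, hence $\lambda = \lambda_0$; feeding this back into the displayed equation makes its right-hand side vanish, so $\VAxm = \lambda\VBxm$ and $(\lambda,\Vx)$ solves \Eqn{gtep}.

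The derivative bookkeeping is not the obstacle here, since it is already packaged in \Eqn{grad}; I expect the crux to be the reverse direction. A priori, KKT stationarity only tells us that $\VAxm$ lies in the two-dimensional span of $\VBxm$ and $\Vx$, not that it is a scalar multiple of $\VBxm$ alone, so the eigenvalue relation is not immediate. The contraction against $\Vx$ — powered by the identity \Eqn{lambda} together with the positive-definiteness of $\TB$ — is exactly what collapses that span to the single eigenvector direction and simultaneously identifies the multiplier with $\lambda$.
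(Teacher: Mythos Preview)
Your proposal is correct and follows essentially the same route as the paper: both directions are read off the gradient formula \Eqn{grad}, and in the reverse direction you take the inner product with $\Vx$ to force $\lambda = \SAxm/\SBxm$ before substituting back to recover \Eqn{gtep}. Your write-up is somewhat more careful than the paper's (you explicitly note the constraint qualification, the homogeneity argument for rescaling, and the use of positive definiteness of $\TB$ to conclude $\SBxm \neq 0$), but the underlying argument is the same.
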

\begin{proof}
  First, assume $(\lambda,\Vx)$ is a solution to \Eqn{gtep}.
  Let constraint $\Vx \in \Sigma$ be expressed as $\Snxm = 1$. Then
  the Lagrangian is
  \begin{displaymath}
    \mathcal{L}(\Vx,\lambda) = 
    \Sfx - \lambda (\Snxm - 1).    
  \end{displaymath}
  Hence, using the derivatives in \App{derivatives}, we have
  \begin{equation}\label{eq:kkt}
    \nabla_{\Vx} \mathcal{L}(\Vx,\lambda) =
    \frac{m}{\SBxm} 
    \Biggl[
    \bigl( \SAxm \bigr) \; \Vx 
    \;+\; \VAxm
    \;-\; \biggl( \frac{\SAxm}{\SBxm} \biggr) \; \VBxm 
    \Biggr] - m \lambda \Vx = 0.
  \end{equation}
  So, $\Vx$ is a KKT point of \Eqn{opt} with Lagrange multiplier
  $\lambda$ as defined in \Eqn{lambda}.

  To prove the reverse, assume $\Vx$ is a KKT point of \Eqn{opt} with
  Lagrange multiplier $\lambda$.
  Then, \Eqn{kkt} must hold. If we multiply each term in \Eqn{kkt} by $\Vx$, then the third and fourth terms cancel out, and we conclude that $\lambda$ satisfies \Eqn{lambda}. Substituting that back into \Eqn{kkt}, we see that \Eqn{gtep} is satisfied. Hence, the claim.
\end{proof}

From the previous theorem, there is an equivalence between generalized tensor
eigenpairs and KKT points of \Eqn{opt}. Hence, solving \Eqn{opt}
yields eigenpairs. An eigenpair may correspond to a local maximum, a
local minimum, or a saddle point. For a given eigenpair
$(\lambda,\Vx)$ normalized so that $\Vx \in \Sigma$, we can categorize
it by considering the projected Hessian of the Lagrangian, i.e.,
\begin{equation}
  \M{C}(\lambda,\Vx) = \M{U}^{\Tra} 
  \bigl( \MHx - \lambda m \M{I} \bigr)
  \M{U} 
  \in \Real^{(n-1) \times (n-1)},
\end{equation}
where $\M{U} \in \Real^{n \times (n-1)}$ is an orthonormal basis for
$\Vx^{\perp}$. We can then say the following:
\begin{align*}
  \M{C}(\lambda,\Vx) & \text{ positive definite} & \Rightarrow & \text{ local minimum of \Eqn{opt}}, \\
  \M{C}(\lambda,\Vx) & \text{ negative definite} & \Rightarrow & \text{ local maximum of \Eqn{opt}}, \\
  \M{C}(\lambda,\Vx) & \text{ indefinite} & \Rightarrow & \text{ saddle point of \Eqn{opt}}. 
\end{align*}
The argument is very similar to that presented in \cite{KoMa11} and so
is omitted.  Optimization approaches cannot easily find saddle points,
but they can find local minima and maxima. We describe such an
optimization approach in the next section.

\section{Derivation of GEAP algorithm}
\label{sec:derivation}

We propose to use a property of convex functions of the sphere to
develop a monotonically convergent method. We consider an idea
originally from \cite{KoRe02,ReKo03}; see also \cite{KoMa11} for a
proof. We have modified  the theorem here to focus on its
\emph{local} applicability by considering just an open neighborhood of $\V{w}$ rather than all of $\Real^n$.
\begin{theorem}[Kofidis and Regalia \cite{KoRe02,ReKo03}]
  \label{thm:cvx}
  Let $f(\Vx)$ be a given function, and let $\V{w} \in \Sigma$ such that
  $\nabla f(\V{w}) \neq 0$. Let $\Omega$ be an open neighborhood of
  $\V{w}$, and assume $f$ is convex and continuously differentiable on
  $\Omega$.  Define $\V{v} = \nabla f(\V{w}) / \|\nabla f(\V{w})
  \|$. If $\V{v} \in \Omega$ and $\V{v} \ne \V{w}$, then
  $f(\V{v}) - f(\V{w}) > 0$.
\end{theorem}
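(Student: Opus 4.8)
The plan is to invoke the first-order characterization of convexity and then exploit the fact that $\V{v}$ is the \emph{normalized} gradient at $\V{w}$. First I would write down the supporting-hyperplane (tangent-line) inequality for the convex, continuously differentiable function $f$ at the base point $\V{w}$, namely $f(\V{v}) - f(\V{w}) \ge \nabla f(\V{w})^{\Tra}(\V{v} - \V{w})$. This is the only place where convexity is used. It is also where the technical care is needed: the tangent-line bound requires the whole segment from $\V{w}$ to $\V{v}$ to lie in the region on which $f$ is convex. Since $\Omega$ is an open neighborhood of $\V{w}$ with $\V{v} \in \Omega$, I would take $\Omega$ to be convex (shrinking it if necessary), so that the segment $[\V{w},\V{v}] \subseteq \Omega$ and the inequality is legitimate. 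This domain/segment bookkeeping is the step I expect to be the main obstacle, because on a merely open, non-convex set the first-order bound can fail.

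Next I would simplify the right-hand side using $\V{v} = \nabla f(\V{w}) / \|\nabla f(\V{w})\|$. The first inner product collapses to $\nabla f(\V{w})^{\Tra}\V{v} = \|\nabla f(\V{w})\|$, while the second becomes $\nabla f(\V{w})^{\Tra}\V{w} = \|\nabla f(\V{w})\| \, (\V{v}^{\Tra}\V{w})$. Factoring out $\|\nabla f(\V{w})\|$ then yields the clean lower bound $f(\V{v}) - f(\V{w}) \ge \|\nabla f(\V{w})\| \, \bigl(1 - \V{v}^{\Tra}\V{w}\bigr)$.

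Finally I would argue that this lower bound is strictly positive, which forces the strict inequality in the conclusion. The factor $\|\nabla f(\V{w})\|$ is positive because $\nabla f(\V{w}) \neq 0$ by hypothesis. For the remaining factor, note that $\V{v}$ is a unit vector by construction (indeed $\V{v} \in \Sigma$) and $\V{w} \in \Sigma$ by assumption, so the Cauchy--Schwarz inequality gives $\V{v}^{\Tra}\V{w} \le 1$, with equality precisely when $\V{v} = \V{w}$. Since $\V{v} \ne \V{w}$ by hypothesis, the inequality is strict, so $1 - \V{v}^{\Tra}\V{w} > 0$. The lower bound is therefore a product of two strictly positive numbers, and we conclude $f(\V{v}) - f(\V{w}) > 0$, as claimed.
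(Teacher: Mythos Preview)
The paper does not actually prove this theorem; it merely cites \cite{KoRe02,ReKo03} and refers to \cite{KoMa11} for a proof. Your argument is exactly the standard one given in those references: apply the first-order convexity inequality at $\V{w}$, substitute $\V{v}=\nabla f(\V{w})/\|\nabla f(\V{w})\|$, and use Cauchy--Schwarz on the unit vectors $\V{v},\V{w}\in\Sigma$ to get $1-\V{v}^{\Tra}\V{w}>0$.

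One small caveat on the technical step you flagged: your proposed remedy of ``shrinking $\Omega$ to be convex'' does not quite work as stated, since a convex open subset of $\Omega$ containing $\V{w}$ need not still contain $\V{v}$. In practice the hypothesis ``$f$ convex on $\Omega$'' is understood with $\Omega$ convex (this is how the notion is used in \cite{KoMa11}, where the argument is applied after adding a shift large enough to make $f$ convex on a ball, or globally). With that reading your proof goes through without modification; otherwise the theorem statement itself, not your argument, would need a mild additional hypothesis.
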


\begin{corollary}
  \label{cor:cnv}
  Let $f(x)$ be a given function, and let $\V{w} \in \Sigma$ such that
  $\nabla f(\V{w}) \neq 0$. Let $\Omega$ be an open neighborhood of
  $\V{w}$, and assume $f$ is concave and continuously differentiable on
  $\Omega$.  Define $\V{v} = - \nabla f(\V{w}) / \|\nabla f(\V{w})
  \|$. If $\V{v} \in \Omega$ and $\V{v} \ne \V{w}$, then
  $f(\V{v}) - f(\V{w}) < 0$.
\end{corollary}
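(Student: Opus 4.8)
The plan is to deduce the concave statement directly from the convex \Thm{cvx} by applying it to the negated function. Concavity of $f$ is equivalent to convexity of $g \equiv -f$, so the idea is to run $g$ through the hypotheses of the theorem and then translate the resulting strict inequality back to $f$ by a single sign flip. No genuinely new analysis is needed; the work is entirely in checking that the data of the corollary maps onto the data of the theorem.

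First I would set $g = -f$ and verify that $g$ satisfies the hypotheses of \Thm{cvx} on $\Omega$. Since $f$ is concave and continuously differentiable on $\Omega$, the function $g$ is convex and continuously differentiable there. Moreover $\nabla g(\V{w}) = -\nabla f(\V{w})$, which is nonzero because $\nabla f(\V{w}) \neq 0$, and $\|\nabla g(\V{w})\| = \|\nabla f(\V{w})\|$ since the norm is insensitive to the overall sign.

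Next I would match the candidate point. The theorem, applied to $g$, produces the point $\nabla g(\V{w})/\|\nabla g(\V{w})\|$. Substituting $\nabla g(\V{w}) = -\nabla f(\V{w})$ shows this equals $-\nabla f(\V{w})/\|\nabla f(\V{w})\|$, which is exactly the $\V{v}$ defined in the corollary. Hence the theorem's point coincides with our $\V{v}$, and the two side conditions $\V{v} \in \Omega$ and $\V{v} \neq \V{w}$ transfer without change. Invoking \Thm{cvx} for $g$ then yields $g(\V{v}) - g(\V{w}) > 0$; replacing $g$ by $-f$ and multiplying through by $-1$ reverses the inequality to give $f(\V{v}) - f(\V{w}) < 0$, which is the claim.

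The only point requiring any care—and the nearest thing to an obstacle—is the bookkeeping of the minus sign in the definition of $\V{v}$: the corollary builds in the extra negation precisely so that, after forming $g = -f$, the theorem's ascent direction for $g$ lines up with the descent direction for $f$. Once that alignment is confirmed, the conclusion follows mechanically.
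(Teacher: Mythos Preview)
Your proposal is correct and is exactly the intended argument: the paper states this result as an immediate corollary of \Thm{cvx} without writing out a proof, and applying the theorem to $g=-f$ is the obvious (and only natural) way to read that. Your verification of the sign bookkeeping is complete.
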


Hence, if $f$ is locally convex, then a simple algorithm, i.e.,
\begin{displaymath}
\V{x}_{\text{new}} = \Vgx / \| \Vgx \|.
\end{displaymath}
will yield ascent. Conversely, if $f$ is locally concave, we can
expect descent from $\V{x}_{\text{new}} = -\Vgx / \| \Vgx \|$. Unfortunately, the function $\Sfx$ in \Eqn{opt} may
not be convex or concave.

To fix this, we work with a shifted function,
\begin{equation}
  \Sfhatx = \Sfx + \alpha \Snxm.
\end{equation}
From \cite{KoMa11}, we have that for $\Vx \in \Sigma$, 
\begin{align}
  \label{eq:ghat}
  \Vghatx & \equiv \nabla \Sfhatx = \Vgx + \alpha m \Vx, \\
  \label{eq:hhat}
  \MHhatx & \equiv \nabla^2 \Sfhatx = \MHx + \alpha m \M{I} + \alpha m (m-2) \Vx\Vx^{\Tra}.
\end{align}
If we choose $\alpha$ appropriately, then we can ensure that $\MHhatx$
is positive or negative definite, ensuring that $\Sfhatx$ is locally convex or
concave.  In \cite{KoMa11} for the special case of $\TB=\T{E}$, we proposed choosing a single value for
$\alpha$ in SS-HOPM that ensured convexity on the entire sphere. But it is
difficult to choose a reasonable value in advance, and poor choices
lead to either very slow convergence or a complete lack of convergence.
In this work, we propose to choose $\alpha$ adaptively.

For an arbitrary matrix $n \times n$ symmetric matrix $\M{M}$, the
following notation denotes its eigenvalues: $\lambda_{\min}(\M{M}) = \lambda_1(\M{M}) \leq
\lambda_2(\M{M}) \leq \dots \leq \lambda_n(\M{M}) =
\lambda_{\max}(\M{M})$.

\begin{theorem}
  Assume $\Vx \in \Sigma$.  Let $\MHx$ and $\MHhatx$ be defined as in
  \Eqn{hess} and \Eqn{hhat}, respectively.  For $\alpha \geq 0$, the
  eigenvalues of $\MHhatx[]$ are bounded as
  \begin{equation}
  \label{eq:eigboundpos}
  \lambda_i(\MHx[]) + m \alpha 
  \leq \lambda_i(\MHhatx[])
  \leq \lambda_i(\MHx[]) + m \alpha + m(m-2) \alpha
  \end{equation}
  for $i=1,\dots\,n$. Likewise, for $\alpha \leq 0$, the eigenvalues
  of $\MHhatx[]$ are bounded as
  \begin{equation}
  \label{eq:eigboundneg}
  \lambda_i(\MHx[]) + m \alpha + m(m-2) \alpha
  \leq \lambda_i(\MHhatx[])
  \leq \lambda_i(\MHx[]) + m \alpha 
  \end{equation}
  for $i=1,\dots\,n$.
\end{theorem}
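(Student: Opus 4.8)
The plan is to read the two bounds directly off the additive decomposition of the shifted Hessian recorded in \Eqn{hhat}, namely $\MHhatx = \MHx + \alpha m\,\M{I} + \alpha m (m-2)\,\Mxxt$, by combining the trivial effect of the scalar shift with Weyl's perturbation inequality applied to the rank-one term. Concretely, I would split the analysis into the two summands $\alpha m\,\M{I}$ and $\alpha m(m-2)\,\Mxxt$ added to $\MHx[]$, and track their separate contributions to each eigenvalue.

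First I would dispose of the identity shift: for any symmetric matrix $\M{M}$ and scalar $c$ one has $\lambda_i(\M{M} + c\,\M{I}) = \lambda_i(\M{M}) + c$, so passing from $\MHx[]$ to $\MHx[] + \alpha m\,\M{I}$ translates every eigenvalue by exactly $\alpha m$. Next I would identify the spectrum of the rank-one term. Since $\Vx \in \Sigma$ means $\|\Vx\| = 1$, the matrix $\Mxxt$ is the orthogonal projector onto $\mathrm{span}(\Vx)$, so its eigenvalues are $1$ (with eigenvector $\Vx$) and $0$ (with multiplicity $n-1$ on $\Vx^{\perp}$). Hence $\M{R} := \alpha m(m-2)\,\Mxxt$ has extreme eigenvalues $0$ and $\alpha m(m-2)$, and because $m$ is even we have $m(m-2) \geq 0$, so which of these two is $\lambda_{\max}(\M{R})$ and which is $\lambda_{\min}(\M{R})$ is decided solely by the sign of $\alpha$.

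Finally I would apply Weyl's inequality $\lambda_i(\M{K}) + \lambda_{\min}(\M{R}) \leq \lambda_i(\M{K} + \M{R}) \leq \lambda_i(\M{K}) + \lambda_{\max}(\M{R})$ (itself a consequence of the Courant--Fischer min--max characterization) with $\M{K} = \MHx[] + \alpha m\,\M{I}$, whose eigenvalues are $\lambda_i(\MHx[]) + \alpha m$. For $\alpha \geq 0$ we have $\lambda_{\min}(\M{R}) = 0$ and $\lambda_{\max}(\M{R}) = \alpha m(m-2)$, which gives \Eqn{eigboundpos}; for $\alpha \leq 0$ the two contributions swap, with $\lambda_{\min}(\M{R}) = \alpha m(m-2)$ and $\lambda_{\max}(\M{R}) = 0$, which gives \Eqn{eigboundneg}. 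The computation is otherwise entirely routine; the only place demanding care is tracking which end of the spectrum of $\M{R}$ supplies the upper versus the lower bound as $\alpha$ changes sign, since the roles of the $0$ and the $\alpha m(m-2)$ contributions interchange between the two cases.
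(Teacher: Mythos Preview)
Your argument is correct and matches the paper's approach exactly: the paper's proof is the single line ``The proof follows immediately from Weyl's inequality,'' and you have simply spelled out the details of that application by separating the scalar shift $\alpha m\,\M{I}$ from the rank-one perturbation $\alpha m(m-2)\,\Mxxt$. One minor remark: your justification ``because $m$ is even'' for $m(m-2)\geq 0$ is unnecessary, since $m(m-2)\geq 0$ holds for every integer $m\geq 2$.
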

\begin{proof}
  The proof follows immediately from Weyl's inequality.
\end{proof}

In the convex case, our goal is to choose $\alpha$ so that $\MHhatx[]$
is positive semi-definite in a local neighborhood of the current
iterate, $\Vx$. By the smoothness of $\Sfhatx$ when $\Vx$ is away from
zero, we may argue that for every $\tau > 0$, there exists and $\delta
> 0$ such that $\MHhatx[]$ is positive semi-definite for all $\|\V{x}
- \V{x}_k\| \leq \delta$ whenever $\lambda_{\min}(\MHhatx[]) \geq
\tau$. In other words, $\tau$ is the threshold for positive
definiteness.
  
\begin{corollary}
  Assume $\Vx \in \Sigma$. Let $\tau > 0$.  If
  \begin{equation}
  \alpha = \max\set{ 0, (\tau - \lambda_{\min}(\MHx[]))/m},
  \end{equation}
  then $\lambda_{\min} (\MHhatx[]) \geq \tau$.
\end{corollary}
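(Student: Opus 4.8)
The plan is to use the eigenvalue bound from the immediately preceding theorem, specifically the lower bound in \Eqn{eigboundpos} for the case $\alpha \geq 0$, together with a short case analysis on whether the maximum in the definition of $\alpha$ is achieved by $0$ or by $(\tau - \lambda_{\min}(\MHx[]))/m$.

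First I would observe that the stated $\alpha$ is nonnegative by construction (it is a maximum with $0$), so the hypothesis $\alpha \geq 0$ of the preceding theorem is met, and we may invoke the lower bound $\lambda_i(\MHx[]) + m\alpha \leq \lambda_i(\MHhatx[])$ for every $i$. Taking $i$ to be the index of the smallest eigenvalue and noting that shifting by a constant $m\alpha$ preserves the ordering of eigenvalues, this yields $\lambda_{\min}(\MHhatx[]) \geq \lambda_{\min}(\MHx[]) + m\alpha$.

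Next I would split into two cases according to the value of $\alpha$. In the first case, $\alpha = (\tau - \lambda_{\min}(\MHx[]))/m$, which happens precisely when this quantity is nonnegative, i.e.\ when $\lambda_{\min}(\MHx[]) \leq \tau$. Substituting directly gives $\lambda_{\min}(\MHx[]) + m\alpha = \lambda_{\min}(\MHx[]) + (\tau - \lambda_{\min}(\MHx[])) = \tau$, so $\lambda_{\min}(\MHhatx[]) \geq \tau$ as claimed. In the second case, $\alpha = 0$, which occurs when $(\tau - \lambda_{\min}(\MHx[]))/m \leq 0$, i.e.\ when $\lambda_{\min}(\MHx[]) \geq \tau$; then $\MHhatx[] = \MHx[]$ and the conclusion $\lambda_{\min}(\MHhatx[]) = \lambda_{\min}(\MHx[]) \geq \tau$ is immediate.

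There is essentially no hard part here: the result is a direct corollary of the Weyl-based bound established just above, and the only substantive step is the elementary case analysis to confirm that the $\max$ in the definition of $\alpha$ always forces $\lambda_{\min}(\MHx[]) + m\alpha \geq \tau$. The one point warranting a line of care is ensuring the sign condition on $\alpha$ lines up with which branch of the $\max$ is active, so that we are always entitled to apply the $\alpha \geq 0$ bound \Eqn{eigboundpos} rather than the $\alpha \leq 0$ bound; since the $\max$ guarantees $\alpha \geq 0$ in both branches, this is automatic.
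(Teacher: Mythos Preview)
Your proposal is correct and is exactly the natural argument: apply the lower bound \Eqn{eigboundpos} (valid since $\alpha \geq 0$ by construction) and do the obvious two-case check on which branch of the $\max$ is active. The paper states this corollary without proof, treating it as immediate from the preceding theorem; your write-up simply fills in the routine details.
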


In the concave case, our goal is to choose $\alpha$ so that $\MHhatx[]$ is negative semi-definite
in a local neighborhood of the current iterate.
\begin{corollary}
  Assume $\Vx \in \Sigma$. Let $\tau > 0$.  If
  \begin{equation}
  \alpha = \min\set{ 0, -(\tau + \lambda_{\max}(\MHx[]))/m}
         = - \max \set{0, \tau - \lambda_{\min}(-\MHx[]))/m},
  \end{equation}
  then $\lambda_{\max} (\MHhatx[]) \leq -\tau$.
\end{corollary}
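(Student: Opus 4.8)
The plan is to mirror the argument used for the preceding (convex) corollary, but using the upper eigenvalue bound \Eqn{eigboundneg} in place of the lower bound. First I would note that the prescribed shift satisfies $\alpha \leq 0$, since it is written as $\min\set{0,\cdot}$; hence the hypothesis $\alpha \leq 0$ of the bound \Eqn{eigboundneg} is met automatically, and I may apply its right-hand inequality. Taking $i = n$ (the index of the largest eigenvalue) gives
\begin{displaymath}
  \lambda_{\max}(\MHhatx[]) \leq \lambda_{\max}(\MHx[]) + m\alpha,
\end{displaymath}
so it remains only to check that the chosen $\alpha$ forces the right-hand side below $-\tau$.

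Next I would split into two cases according to the sign of $\tau + \lambda_{\max}(\MHx[])$. If $\lambda_{\max}(\MHx[]) \leq -\tau$, then $-(\tau + \lambda_{\max}(\MHx[]))/m \geq 0$, so the minimum defining $\alpha$ equals $0$, whence $\MHhatx[] = \MHx[]$ and $\lambda_{\max}(\MHhatx[]) = \lambda_{\max}(\MHx[]) \leq -\tau$ immediately. Otherwise $\lambda_{\max}(\MHx[]) > -\tau$, so $\alpha = -(\tau + \lambda_{\max}(\MHx[]))/m < 0$; substituting into the displayed bound yields $\lambda_{\max}(\MHx[]) + m\alpha = -\tau$, which is the claim.

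It then remains to justify the second expression for $\alpha$ in the statement, which is a purely notational change of variables: for any symmetric matrix one has $\lambda_{\min}(-\MHx[]) = -\lambda_{\max}(\MHx[])$, so $\tau - \lambda_{\min}(-\MHx[]) = \tau + \lambda_{\max}(\MHx[])$, and the identity $\min\set{0,-t} = -\max\set{0,t}$ (applied with $t = (\tau + \lambda_{\max}(\MHx[]))/m$) converts the first expression into the second.

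I expect essentially no obstacle; the only point requiring care is invoking the correct bound, namely verifying that $\alpha \leq 0$ so that \Eqn{eigboundneg} rather than \Eqn{eigboundpos} applies, and using its upper (not lower) inequality, so that the term $m\alpha \leq 0$ actually decreases the controlling eigenvalue in the direction we want.
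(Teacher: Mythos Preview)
Your argument is correct and is exactly the intended one: the paper states this corollary without proof, as an immediate consequence of the eigenvalue bound \Eqn{eigboundneg}, and your two-case verification (together with the identity $\lambda_{\min}(-\MHx[]) = -\lambda_{\max}(\MHx[])$ for the alternative formula) is precisely the implicit derivation.
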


From \Thm{cvx},  if $\alpha$ is selected to make $\Sfhatx$ locally convex, we have
\begin{displaymath}
  \Vx_+ = \Vghatx / \| \Vghatx \|
  \quad\Rightarrow\quad
  \Sfhatx[(\Vx_+)] > \Sfhatx
  \quad\Rightarrow\quad
  \Sfx[(\Vx_+)] > \Sfx
\end{displaymath}
so long as $\Vx_+ \in \Omega$, the convex neighborhood of $\Vx$.
Even though we are adaptively changing $\alpha$, 
we see increase in the original function at each step.
A similar argument applies in the concave case, 
with the function decreasing at each step.

The potential problem with this approach is that it may be the case that $\Vx_+
\not\in \Omega$. 
If that happens, 
we may observe that the function values (i.e., $\lambda_k$) are 
not increasing (or decreasing) monotonically
as expected. To fix this, we make a more conservative choice for $\tau$
(at least temporarily), which
will in turn enforce a more conservative choice of $\alpha$.
If $\tau$ is large enough, then we will satisfy the lower bound on $\alpha$ that guarantees convergence for the shifted algorithm (this is proven for Z-eigenvalue in \cite{KoMa11}; the proof for the general problem is similar and so omitted).
Thus far in our experiments, such contingencies have not been necessary, 
so we have not included the details in the algorithm.

\subsection{GEAP Algorithm}
\label{sec:geapalg}
The full algorithm is presented in \Alg{geap}.

\begin{algorithm}
  \caption{Generalized Eigenpair Adaptive Power (GEAP) Method}   
  \label{alg:geap}  
  Given tensors $\TA \in \Smn$ and $\TB \in \Smn_+$ and an initial guess
  $\V[\hat]{x}_0$. Let $\beta=1$ if we want to find local maxima (and the function
  is convex); otherwise, let $\beta=-1$, indicating that we are seeking local
  minima (and the function is concave). Let $\tau$ be the tolerance on being positive/negative definite.
  \begin{algorithmic}[1]
    \State $\Vx_0 \gets \V[\hat]{x}_0 / \| \V[\hat]{x}_0 \|$
    \For{$k=0,1,\dots$}
    \State Precompute $\MAxm_k$, $\MBxm_k$, $\VAxm_k$, $\VBxm_k$, $\SAxm_k$, $\SBxm_k$
    \State $\lambda_k \gets {\SAxm_k}/{\SBxm_k}$
    \State $\M{H}_k \gets \MHx[(\Vx_k)]$    
    \State $\alpha_k \gets \beta \max \{0, (\tau - \lambda_{\min}(\beta \M{H}_k))/m\}$
    \State $\V[\hat]{x}_{k+1} \gets \beta \bigl( \VAxm_k - \lambda_k \VBxm_k + (\alpha_k + \lambda_k) \SBxm_k \Vx_k \bigr)$
    \State $\Vx_{k+1} = \V[\hat]{x}_{k+1} / \| \V[\hat]{x}_{k+1} \|$
    \EndFor
  \end{algorithmic}
\end{algorithm}

The cost per iteration of \Alg{geap} is as follows. Assuming $\TA$ and $\TB$ are dense, the dominant cost is computing products with these tensors. 
Computing the Hessian requires six products:
$\MAxm, \VAxm, \SAxm, \MBxm, \VBxm, \SBxm$.
Recall that $\MAxm$ is given by
\begin{displaymath}
  (\MAxm)_{i_1i_2} = \sum_{i_3=1}^n \cdots \sum_{i_m=1}^n 
  a_{i_1 i_2 \dots i_m} x_{i_3} \cdots x_{i_m}
  \qtext{for all} i_1,i_2 = 1,\dots,n.
\end{displaymath}
The cost is $(m-2)\cdot n^{m-2}$ multiplies and $n^{m-2}$ additions per each of $n^2$ entries; therefore, the total cost is $(m-1)n^m$ operations. 
Exploiting symmetry yields reduces the cost to $O(n^m/m!)$ \cite{ScLoVaKoXX}.
We can compute
\begin{displaymath}
  \VAxm = (\MAxm) \Vx \qtext{and} \SAxm = (\VAxm)^{\Tra} \Vx,
\end{displaymath}
for an additional cost of $2n^2$ and $2n$ operations, respectively. (These can also be computed directly, but at a cost of $(m-1)n^m$ operations each.)
These values have to be computed for both $\TA$ and $\TB$ at every iteration for a total cost (ignoring symmetry) of $2(m-1)n^m + 4n^2 + 4n$.
Once these six products are computed, the cost for computing $\M{H}(\Vx)$ is a series of matrix operations, for a total cost of $20n^2$.
The cost of computing the eigenvalues of the symmetric matrix $\M{H}(\Vx)$ is $(4/3) n^3$, which is less than the cost of the products.
Updating $\Vx$ requires a 5 vector operations at a cost of $n$ operations each. Hence, cost of the method is dominated by the computation of $\MAxm$ and $\MBxm$, at an expense of $O(n^m/m!)$.

\subsection{Specialization of GEAP to Z-eigenpairs}
In \Alg{zeap}, we show the specialization of the method to the Z-eigenvalue
problem. This is the same as SS-HOPM, except for the adaptive shift. Note that
unlike \Alg{geap}, this algorithm can be used even when $m$ is odd.
The cost per iteration of \Alg{zeap} is the same order as for \Alg{geap}, but it does not need to do any computations with $\T{B}$.
\begin{algorithm}
  \caption{Z-Eigenpair Adaptive Power Method}   
  \label{alg:zeap}  
  Given tensor $\TA \in \Smn$ and an initial guess
  $\V[\hat]{x}_0$. Let $\beta=1$ if we want to find local maxima (and the function
  is convex); otherwise, let $\beta=-1$, indicating that we are seeking local
  minima (and the function is concave). Let $\tau$ be the tolerance on being positive/negative definite.
  \begin{algorithmic}[1]
    \State $\Vx_0 \gets \V[\hat]{x}_0 / \| \V[\hat]{x}_0 \|$
    \For{$k=0,1,\dots$}
    \State Precompute $\MAxm_k$, $\VAxm_k$, $\SAxm_k$
    \State $\lambda_k \gets {\SAxm_k}$
    \State $\M{H}_k \gets m(m-1)\MAxm_k$    
    \State $\alpha_k \gets \beta \max \{0, (\tau - \lambda_{\min}(\beta \M{H}_k))/m\}$
    \State $\V[\hat]{x}_{k+1} \gets \beta \bigl( \VAxm_k + \alpha_k \Vx_k \bigr)$
    \State $\Vx_{k+1} = \V[\hat]{x}_{k+1} / \| \V[\hat]{x}_{k+1} \|$
    \EndFor
  \end{algorithmic}
\end{algorithm}

\section{Numerical experiments}
\label{sec:experiments}

All numerical tests were done using MATLAB Version R2012b and the Tensor Toolbox Version 2.5 \cite{TTB}. The experiments were performed a laptop computer with an Intel Dual-Core i7-3667UCPU (2GHz) and 8GB of RAM.

In all numerical experiments, we used the following settings.
We set $\tau=10^{-6}$, where $\tau$ is the tolerance on being positive or negative definite.
We consider the iterates to be converged once $|\lambda_{k+1} - \lambda_k| \leq 10^{-15}$.
The maximum iterations is 500.

\subsection{Comparison to SS-HOPM for computing Z-eigenpairs}

The following example is originally from \cite{KoRe02} and was used in evaluating the SS-HOPM algorithm in \cite{KoMa11}. Our goal is to compute the Z-eigenpairs \Eqn{ztep} using the Z-Eigenpair Adaptive Power Method in \Alg{zeap} and show that it is faster than SS-HOPM~\cite{KoMa11} using a fixed value for the shift.

\begin{exampleplusname}{{Kofidis and Regalia \cite{KoRe02}}}%
  \label{ex:KoRe02ex1}%
 Our objective is to compute the Z-eigenpairs. 
 Let $\TA \in \Smn[4,3]$ be the symmetric tensor given by Kofidis and
 Regalia \cite[Example 1]{KoRe02} whose  entries are specified in \App{entries}
 (\Fig{KoRe02}).
 Since we are computing Z-eigenpairs, we have $\TB = \T{E}$.
  A complete list of the 11 Z-eigenpairs is provided in \App{eigs} (\Tab{KoRe02ex1_ep});
  there are three maxima and three minima.
\end{exampleplusname}

A comparison of the fixed and adaptive shift results are provided in \Tab{KoRe02ex1}. 
There are six different experiments looking at maxima ($\beta=1$) and minima ($\beta=-1$) and different shifts ($\alpha=2, 10, \text{adaptive}$) in \Alg{zeap}.
Note that using a fixed shift means that \Alg{zeap} is equivalent to SS-HOPM and no adaptive update of the shift is performed in Step 5. 

We used 100 random starting guesses, each entry selected uniformly at random
from the interval $[-1,1]$; the same set of random starts was used for each set
of experiments.
For each eigenpair, the table lists the number of occurrences in the 100
experiments, the median number of iterations until convergence, 
the number of runs that violated monotonicity, the average error and standard deviation in the final result, and the average run time and standard deviation.
The error is computed as $\| \VAxm - \lambda \Vx \|_2$. 
The two monotinicity violations were both extremely small, i.e., O($10^{-12}$).
These violations indicate that a step went outside the region of local convexity. 

\begin{table}[pthb]
  \setlength{\tabcolsep}{3pt}
  \centering
  \footnotesize
  \caption{Different shifts to calculate Z-eigenpairs for $\TA\in \Smn[2,3]$ from \Ex{KoRe02ex1}.}
  \label{tab:KoRe02ex1}
\begin{ResultsSubTable}{$\alpha$ adaptive, $\beta=1$}{3}
  53 &  0.8893 &  0.6672 &  0.2471 & -0.7027 &   30 & \multicolumn{2}{c|}{--} & 9e-09 & 3e-09 & 0.05 & 0.02 \\ \hline 
  29 &  0.8169 &  0.8412 & -0.2635 &  0.4722 &   34 & \multicolumn{2}{c|}{--} & 1e-08 & 3e-09 & 0.04 & 0.01 \\ \hline 
  18 &  0.3633 &  0.2676 &  0.6447 &  0.7160 &   26 & \multicolumn{2}{c|}{--} & 7e-09 & 2e-09 & 0.03 & 0.00 \\ \hline 
\end{ResultsSubTable}

\begin{ResultsSubTable}{$\alpha=2$, $\beta=1$}{3}
  53 &  0.8893 &  0.6672 &  0.2471 & -0.7027 &   49 &   1 & 1e-15 & 2e-08 & 3e-09 & 0.07 & 0.03 \\ \hline 
  29 &  0.8169 &  0.8412 & -0.2635 &  0.4722 &   45 & \multicolumn{2}{c|}{--} & 2e-08 & 3e-09 & 0.04 & 0.00 \\ \hline 
  18 &  0.3633 &  0.2676 &  0.6447 &  0.7160 &   57 & \multicolumn{2}{c|}{--} & 2e-08 & 2e-09 & 0.06 & 0.00 \\ \hline 
\end{ResultsSubTable}

\begin{ResultsSubTable}{$\alpha=10$, $\beta=1$}{3}
  48 &  0.8893 &  0.6672 &  0.2471 & -0.7027 &  192 & \multicolumn{2}{c|}{--} & 5e-08 & 6e-09 & 0.24 & 0.12 \\ \hline 
  29 &  0.8169 &  0.8412 & -0.2635 &  0.4722 &  185 & \multicolumn{2}{c|}{--} & 5e-08 & 5e-09 & 0.17 & 0.02 \\ \hline 
  18 &  0.3633 &  0.2676 &  0.6447 &  0.7160 &  261 & \multicolumn{2}{c|}{--} & 5e-08 & 2e-09 & 0.24 & 0.02 \\ \hline 
   5 & \multicolumn{5}{|c|}{\emph{Failed to converge in 500 iterations}} & \multicolumn{2}{c|}{--} & \multicolumn{2}{c|}{N/A} & 0.43 & 0.01 \\ \hline 
\end{ResultsSubTable}

\begin{ResultsSubTable}{$\alpha={\rm adaptive}$, $\beta=-1$}{3}
  22 & -0.0451 &  0.7797 &  0.6135 &  0.1250 &   18 & \multicolumn{2}{c|}{--} & 4e-09 & 2e-09 & 0.02 & 0.00 \\ \hline 
  37 & -0.5629 &  0.1762 & -0.1796 &  0.9678 &   17 & \multicolumn{2}{c|}{--} & 6e-09 & 2e-09 & 0.02 & 0.00 \\ \hline 
  41 & -1.0954 &  0.5915 & -0.7467 & -0.3043 &   17 & \multicolumn{2}{c|}{--} & 6e-09 & 3e-09 & 0.02 & 0.01 \\ \hline 
\end{ResultsSubTable}

\begin{ResultsSubTable}{$\alpha=-2$, $\beta=-1$}{3}
  22 & -0.0451 &  0.7797 &  0.6135 &  0.1250 &   34 & \multicolumn{2}{c|}{--} & 1e-08 & 2e-09 & 0.03 & 0.00 \\ \hline 
  37 & -0.5629 &  0.1762 & -0.1796 &  0.9678 &   20 & \multicolumn{2}{c|}{--} & 7e-09 & 2e-09 & 0.02 & 0.00 \\ \hline 
  41 & -1.0954 &  0.5915 & -0.7467 & -0.3043 &   21 &   1 & 1e-15 & 8e-09 & 4e-09 & 0.02 & 0.00 \\ \hline 
\end{ResultsSubTable}

\begin{ResultsSubTable}{$\alpha=-10$, $\beta=-1$}{3}
  22 & -0.0451 &  0.7797 &  0.6135 &  0.1250 &  186 & \multicolumn{2}{c|}{--} & 5e-08 & 2e-09 & 0.16 & 0.01 \\ \hline 
  37 & -0.5629 &  0.1762 & -0.1796 &  0.9678 &  103 & \multicolumn{2}{c|}{--} & 4e-08 & 4e-09 & 0.09 & 0.01 \\ \hline 
  41 & -1.0954 &  0.5915 & -0.7467 & -0.3043 &   94 & \multicolumn{2}{c|}{--} & 4e-08 & 6e-09 & 0.09 & 0.01 \\ \hline 
\end{ResultsSubTable}

\end{table}

The first three experiments use $\beta=1$ to look for local
maxima. The first experiment varies $\alpha$, the second uses
$\alpha=2$ (as in \cite{KoMa11}), and the third uses $\alpha=10$.  All
three variations find all three local maxima. The results for
$\alpha=2$ and the adaptive method are nearly identical --- they find
the same local maxima with the same frequency. The difference is that
$\alpha=2$ uses more iterations than the adaptive shift. Choosing $\alpha=10$ is
similar, except now five of the runs do not converge within the
allotted 500 iterations.  There was no breakdown in monotonicity, and
these runs would converge eventually. If the shift is too small (e.g.,
$\alpha=0$), then some or all of the runs may fail to
converge~\cite{KoMa11}.

The last three experiments use $\beta=-1$ to find local minima. Again,
we vary $\alpha$ using an adaptive choice along with $\alpha=-2$ and
$\alpha=-10$. The adaptive method requires the fewest number of
iterations. Each experiments finds all three local minima with the
exact same frequencies.

To compare the convergence in terms of the number of iterations, \Fig{KoRe02ex1} shows sample results for one run for computing Z-eigenpairs of
$\TA$ from \Ex{KoRe02ex1}. The left hand plot shows the selected shift values
at each iteration. The right hand plot shows the convergence of the eigenvalue.
The adaptive shift is the fastest to converge.

\begin{figure}[bhtp]
  \centering
  \subfloat[Adapative Shift Selection]{\includegraphics[width=0.45\textwidth]{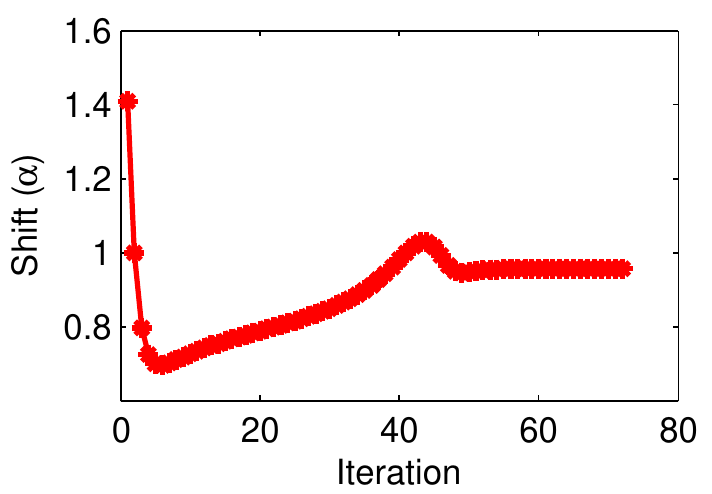}}~~
  \subfloat[Convergence of eigenvalues]{\includegraphics[width=0.45\textwidth]{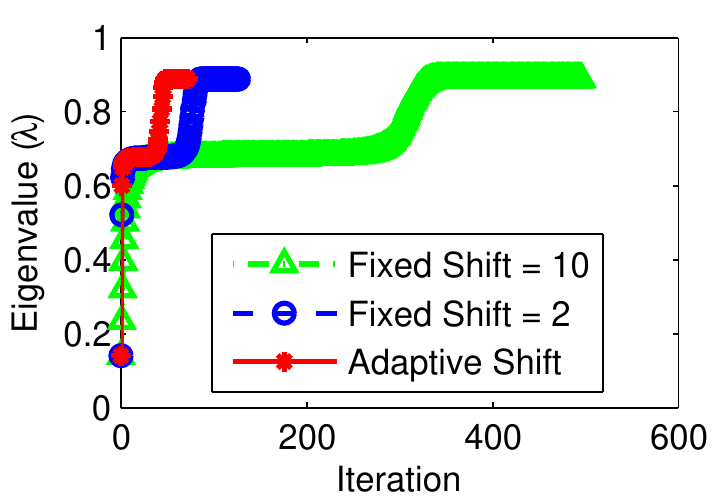}}
  \caption{GEAP sample results for $\TA\in \mathbb{S}^{[4,3]}$ from
    \Ex{KoRe02ex1} with $\beta =1$ and starting point $\Vx_0$ = [ 0.0417 -0.5618 0.6848 ].}
  \label{fig:KoRe02ex1}
\end{figure}

\subsection{Numerical results for H-eigenpairs}
\label{sec:heig}
Here we demonstrate that the GEAP method in \Alg{geap} calculates
H-eigenpairs \Eqn{htep} with an appropriate choice for $\T{B}$.

\begin{example}%
  \label{ex:heig}
  We generate a random symmetric tensor $\TA \in \Smn[6,4]$ as follows:
  we select
  random entries from $[-1,1]$, symmetrize the result, and round to 
  four decimal places. The tensor
  entries are specified in \App{entries}
 (\Fig{random_adef}).
 Since we are computing H-eigenpairs, we specify $\T{B}$ as
 $b_{i_1i_2\dots i_m} = \delta_{i_1i_2\dots i_m}$.
  A complete list of the H-eigenpairs is provided in \App{eigs} (\Tab{heig_ep});
  there are five maxima and five minima.
\end{example}

A summary of the results are provided in \Tab{heig}. 
There are two different experiments looking at maxima ($\beta=1$) and minima ($\beta=-1$).
We used 1000 random starting guesses, each entry selected uniformly at random
from the interval $[-1,1]$; the same set of random starts was used for each 
experiment.
The columns are the same as for \Tab{KoRe02ex1}.
The error is computed as $\| \VAxm - \lambda \Vx^{[m-1]} \|_2$. 

\begin{table}[htpb]
  \setlength{\tabcolsep}{2.5pt}
  \centering\footnotesize
  \caption{H-eigenpairs for $\T{A}\in\Smn[6,4]$ from \Ex{heig}}
  \label{tab:heig}
\begin{ResultsSubTable}{$\beta=1$}{4}
 211 & 14.6941 &  0.5426 & -0.4853 &  0.4760 &  0.4936 &   28 & 124 & 2e-15 & 2e-09 & 2e-09 & 0.08 & 0.02 \\ \hline 
 144 &  9.6386 &  0.5342 & -0.5601 &  0.5466 & -0.3197 &  110 &  53 & 4e-15 & 9e-09 & 3e-09 & 0.27 & 0.04 \\ \hline 
 338 &  8.7371 &  0.4837 &  0.5502 &  0.6671 & -0.1354 &  100 & 143 & 4e-15 & 1e-08 & 5e-09 & 0.26 & 0.06 \\ \hline 
 169 &  5.8493 &  0.6528 &  0.5607 & -0.0627 & -0.5055 &   54 &  19 & 2e-15 & 8e-09 & 3e-09 & 0.13 & 0.02 \\ \hline 
 138 &  4.8422 &  0.5895 & -0.2640 & -0.4728 &  0.5994 &   66 &  13 & 8e-01 & 6e-09 & 1e-09 & 0.16 & 0.01 \\ \hline 
\end{ResultsSubTable}

\begin{ResultsSubTable}{$\beta=-1$}{4}
 130 & -2.9314 &  0.3161 &  0.5173 &  0.4528 & -0.6537 &   76 &   3 & 2e-15 & 7e-09 & 1e-09 & 0.18 & 0.02 \\ \hline 
 149 & -3.7179 &  0.6843 &  0.5519 &  0.3136 &  0.3589 &   59 &  10 & 1e-15 & 7e-09 & 2e-09 & 0.15 & 0.02 \\ \hline 
 152 & -4.1781 &  0.4397 &  0.5139 & -0.5444 &  0.4962 &   99 &   7 & 2e-15 & 5e-09 & 1e-09 & 0.23 & 0.03 \\ \hline 
 224 & -8.3200 &  0.5970 & -0.5816 & -0.4740 & -0.2842 &   65 &  73 & 2e-15 & 8e-09 & 3e-09 & 0.16 & 0.02 \\ \hline 
 345 & -10.7440 &  0.4664 &  0.4153 & -0.5880 & -0.5140 &   47 & 181 & 2e-15 & 4e-09 & 3e-09 & 0.12 & 0.03 \\ \hline 
\end{ResultsSubTable}
\end{table}

The first experiment uses $\beta=1$ to look for local maxima. 
We find all five local maxima. 
There are several monotonicity violations, including at least one for $\lambda=4.8422$ that is relatively large.
These violations indicate that a step went outside the region of local convexity. 
Nevertheless, in all cases the algorithm is able to recover and converge, as can be seen from the small error. 
In general, these monotonicity violations do not cause the algorithm to fail. However,
such violations can be avoided by increasing $\tau$, the tolerance on the definiteness of the Hessian matrix. Once $\tau$ is large enough, the shift will be so great that the function will be convex over the entire unit sphere. The downside of choosing a large value for $\tau$ (and the shift $\alpha_k$) is that convergence will be slow. 

The second experiment uses $\beta = -1$ to look for local minima.
We find all five local minima. 
There are several monotinicity violations, but they are all small.

\subsection{Numerical results for D-eigenpairs}

Next we consider a different type of tensor eigenapair that also
conforms to the generalized tensor eigenpair framework.

\begin{exampleplusname}{{D-eigenpairs \cite{QiWaWu08}}}
  \label{ex:deig}%
  Qi, Wang, and Wu \cite{QiWaWu08} propose D-eigenpairs for diffusion kurtosis  
  imaging (DKI). The tensors $\TA,\TB \in \Smn[4,3]$ are specified in \App{entries} (\Fig{deiga} and \Fig{deigb}, respecitively). We consider this example here since it can be expressed as a
  generalized tensor eigenproblem.\footnote{%
    Note that only four digits of precision for $\TA$ and $\M{D}$ are provided
    in \cite{QiWaWu08}. We were unable to validate the solutions provided in
    the original paper. It is not clear if this is to a lack of precision or a
    typo in paper. Here, the problem is rescaled as well: $\M{D}$ is multiplied
    by $10^2$, $\lambda$ is divided by $10^4$, and $\Vx$ is divided by $10$.}
 There are a total of 13 distinct real-valued D-eigenpairs, computed
  by solving the polynomial equations using Mathematica and listed in
  \App{eigs} (\Tab{deig}); there are four maxima and three minima.
\end{exampleplusname}

\Tab{deig_results} shows the eigenpairs calculated by \Alg{geap}. 
The error is computed as $\| \VAxm - \lambda \VBxm \|_2$. 
With 100 random starts, we find the four local maxima with $\beta=1$. Likewise, with 100 random starts, we find the three local minima with $\beta=-1$. There are no violations to monotonicity.

\begin{table}[htbp]
  \setlength{\tabcolsep}{2.5pt}
  \centering
  \footnotesize
  \caption{D-eigenpairs for $\TA\in \Smn[2,3]$ and $\M{D} \in \Smn[2,3]$ from \Ex{deig}}
  \label{tab:deig_results}
  \begin{ResultsSubTable}{$\beta=1$}{3}
  31 &  0.5356 &  0.9227 & -0.1560 & -0.3526 &   39 & \multicolumn{2}{c|}{--} & 4e-08 & 5e-09 & 0.08 & 0.01 \\ \hline 
  19 &  0.4359 &  0.5388 &  0.8342 & -0.1179 &   48 & \multicolumn{2}{c|}{--} & 3e-08 & 4e-09 & 0.09 & 0.02 \\ \hline 
  25 &  0.2514 &  0.3564 & -0.8002 &  0.4823 &   67 & \multicolumn{2}{c|}{--} & 4e-08 & 3e-09 & 0.13 & 0.01 \\ \hline 
  25 &  0.2219 &  0.2184 &  0.3463 &  0.9124 &   34 & \multicolumn{2}{c|}{--} & 6e-08 & 8e-09 & 0.07 & 0.01 \\ \hline 
  \end{ResultsSubTable}
  \begin{ResultsSubTable}{$\beta=-1$}{3}
  39 & -0.0074 &  0.3669 &  0.5346 & -0.7613 &   13 & \multicolumn{2}{c|}{--} & 1e-08 & 4e-09 & 0.03 & 0.01 \\ \hline 
  37 & -0.1242 &  0.9439 &  0.1022 &  0.3141 &   51 & \multicolumn{2}{c|}{--} & 5e-08 & 5e-09 & 0.10 & 0.01 \\ \hline 
  24 & -0.3313 &  0.2810 & -0.9420 & -0.1837 &   27 & \multicolumn{2}{c|}{--} & 2e-08 & 4e-09 & 0.06 & 0.01 \\ \hline 
  \end{ResultsSubTable}
\end{table}

\subsection{Generalized eigenpairs for randomly generated $\TA$ and $\TB$}

Here we consider a randomly generated problem. 
We use the randomly generated $\TA \in \Smn[6,4]$ described in \Sec{heig}.
However, we need a method to generate a positive definite $\TB$. We use the notation $\TB
= (\T{E},\M{S}, \dots, \M{S}) \in \Smn$ to denote tensor-matrix multiplication in which the tensor $\T{E}$ is multiplied by a matrix $\M{S} \in \Real^{n
    \times n}$ in every mode, i.e.,
  \begin{displaymath}
    b_{i_1 \cdots i_m} = \sum_{j_1 = 1}^n \cdots \sum_{j_m=1}^n e_{j_1 \cdots j_m} s_{i_1 j_1} \cdots s_{i_m j_m}.
  \end{displaymath}
\begin{theorem}\label{thm:posdef}
  Let $\M{S} \in \Real^{n \times n}$ be symmetric.  For $m$ even, define $\TB
  \in \Smn$ as $\TB = (\T{E},\M{S}, \dots, \M{S})$.  If $(\mu,\Vx)$ is
  a real-valued eigenpair of $\M{S}$ and $\|\Vx\|=1$, then
  $(\lambda,\Vx)$ is a Z-eigenpair of $\T{B}$ with $\lambda =
  \mu^m$. Furthermore, $\TB\Vy^m \geq \min_i (\mu_i)^m$ for any $\Vy \in \Real^n$ with $\|\Vy\|=1$.
\end{theorem}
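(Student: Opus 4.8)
The plan is to reduce both assertions to a single scalar identity: for every $\Vy \in \Real^n$,
\begin{displaymath}
  \TB\Vy^m = \|\M{S}\Vy\|^m.
\end{displaymath}
To establish it I would substitute the definition of $b_{i_1\cdots i_m}$ into $\TB\Vy^m = \sum_{i_1,\dots,i_m} b_{i_1\cdots i_m}\, y_{i_1}\cdots y_{i_m}$ and push the sums over $i_1,\dots,i_m$ inward. Since $\M{S}$ is symmetric, each inner sum $\sum_{i_k} s_{i_k j_k}\, y_{i_k}$ is exactly $(\M{S}\Vy)_{j_k}$, so what remains is $\sum_{j_1,\dots,j_m} e_{j_1\cdots j_m} (\M{S}\Vy)_{j_1}\cdots(\M{S}\Vy)_{j_m}$, i.e.\ the identity tensor $\T{E}$ contracted with the vector $\M{S}\Vy$ in every mode. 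The defining property of $\T{E}$, namely $\SExm = \Snxm$ (which holds for any vector in place of $\Vx$), then returns $\|\M{S}\Vy\|^m$ when applied to $\M{S}\Vy$.

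Given this identity the lower bound is immediate. For $\|\Vy\|=1$ I would diagonalize the symmetric matrix as $\M{S} = \sum_i \mu_i\, \V{u}_i \V{u}_i^\Tra$ with $\{\V{u}_i\}$ orthonormal, write $\Vy = \sum_i c_i \V{u}_i$ so that $\sum_i c_i^2 = 1$, and compute $\|\M{S}\Vy\|^2 = \sum_i c_i^2 \mu_i^2 \geq \min_i \mu_i^2$. Because $m$ is even, $m/2$ is a positive integer and $t \mapsto t^{m/2}$ is increasing on $[0,\infty)$, so raising both sides to the power $m/2$ yields $\TB\Vy^m = \|\M{S}\Vy\|^m \geq (\min_i \mu_i^2)^{m/2} = \min_i \mu_i^m$. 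In passing this shows that $\TB$ is positive definite precisely when $\M{S}$ is nonsingular, which is the hypothesis under which $\TB \in \Smn_+$.

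For the eigenpair statement I would avoid repeating the index calculation and instead differentiate the scalar identity. Writing $\SBxm = (\Vx^\Tra \M{S}^2 \Vx)^{m/2}$ and using the gradient relation $\nabla(\SBxm) = m\,\VBxm$ (\Sec{notation}), the chain rule gives
\begin{displaymath}
  \VBxm = \|\M{S}\Vx\|^{m-2}\,\M{S}^2\Vx.
\end{displaymath}
If $(\mu,\Vx)$ is a real eigenpair of $\M{S}$ with $\|\Vx\|=1$, then $\M{S}^2\Vx = \mu^2\Vx$ and $\|\M{S}\Vx\| = |\mu|$, hence $\VBxm = |\mu|^{m-2}\mu^2\Vx = \mu^m\Vx$, the last equality using that $m-2$ is even. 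Together with $\|\Vx\|=1$, this is exactly the defining condition \Eqn{ztep} for $(\mu^m,\Vx)$ to be a Z-eigenpair of $\TB$.

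The only genuine obstacle is the index bookkeeping in the contraction identity: one must verify that mode-wise multiplication by $\M{S}$ commutes past the full vector contraction and, crucially, that symmetry of $\M{S}$ lets each inner sum be read as $(\M{S}\Vy)_{j_k}$ rather than $(\M{S}^\Tra\Vy)_{j_k}$. Everything downstream is routine, the only recurring subtlety being the repeated appeal to $m$ being even so that powers such as $|\mu|^{m-2}$ and $(\mu_i^2)^{m/2}$ simplify without sign ambiguity.
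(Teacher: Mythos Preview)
Your argument is correct and tracks the paper's proof closely: both hinge on the same scalar identity $\TB\Vy^m = \|\M{S}\Vy\|^m$ and then bound $\|\M{S}\Vy\|^2$ via the spectral decomposition of $\M{S}$ in exactly the way you describe. The one genuine difference is how the vector identity for $\TB\Vx^{m-1}$ is obtained. The paper works directly with the multilinear-algebra relation $\TB\Vy^{m-1} = \M{S}\,\T{E}(\M{S}\Vy)^{m-1}$ and then substitutes $\M{S}\Vx=\mu\Vx$ before invoking $\T{E}\Vx^{m-1}=\Vx$, which avoids any $|\mu|$ bookkeeping. You instead differentiate the scalar identity $\SBxm = (\Vx^{\Tra}\M{S}^2\Vx)^{m/2}$ and use $\nabla(\SBxm)=m\,\VBxm$ to read off $\VBxm=\|\M{S}\Vx\|^{m-2}\M{S}^2\Vx$; this reuses the contraction calculation rather than redoing it at one lower order, at the small cost of the $|\mu|^{m-2}=\mu^{m-2}$ parity check. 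Either route is equally short.
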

\begin{proof}
  Let $(\mu,\Vx)$ be an eigenpair of $\M{S}$ such that $\|\Vx\| = 1$. Noting that $\M{S}^{\Tra} = \M{S}$, we have
$
  \TB\Vx^{m-1} 
  = \M{S} \T{E}(\M{S}\Vx)^{m-1} 
  = \M{S} \T{E} (\mu \Vx)^{m-1}
  =  \mu^{m-1} \M{S} \T{E} \Vx^{m-1}
  = \mu^{m-1} \M{S} \Vx
  = \mu^m \Vx
  .
$
To prove the lower bound, 
let $\Vy \in \Real^n$ with $\|\Vy\|=1$. We can write $\Vy$ as a linear combination of the eigenvectors of $\M{S}$, i.e., $\Vy = \sum_i \nu_i \Vx_i$ and $\sum \nu_i^2 = 1$. Then
\begin{align*}
  \TB \Vy^m &= \Vy^{\Tra} \TB \Vy^{(m-1)} 
  = \Vy^{\Tra} \M{S} \T{E} (\M{S} \Vy)^{(m-1)} 
  = \| \M{S} \Vy \|^{m-1} (\M{S}\Vy)^{\Tra} (\M{S} \Vy) = \| \M{S} \Vy\|^m \\
  &= \| \sum_i \mu_i \nu_i \Vx_i \|^m  
  = \left( \sqrt{ \sum_i \mu_i^2 \nu_i^2 }\right)^m
  \geq \left( \sqrt{ \min_i \mu_i^2 } \right)^m = \min_i (\mu_i)^m
\end{align*}
Hence, the claim. 
\end{proof}

\begin{exampleplusname}{Random}\label{ex:random}
  We use the same randomly-generated symmetric tensor $\TA \in
  \Smn[6,4]$ as for \Ex{heig} and listed in \App{entries} (\Fig{random_adef}).
  To generate a random positive definite symmetric tensor $\TB \in
  \Smn_+ = \Smn[6,4]_+$, we use \Thm{posdef}.  (Note that this approach samples a convenient subset
	of $\Smn_+$ and does not draw from the entire space.)  We compute a matrix $\M{S} = \M{U}
  \M{D} \M{U}^{\Tra} \in \Real^{4 \times 4}$, where $\M{U} \in \Real^{4
    \times 4}$ is a random orthonormal matrix and $\M{D} \in \Real^{4
    \times 4}$ is a diagonal matrix with entries selected uniformly at
  random from $[-1,-\gamma] \cup [\gamma,1]$ with $\gamma = \sqrt[m]{0.1} = \sqrt[6]{0.1}$. We let
  $\TB = (\T{E}, \M{S},\dots,\M{S})$, so
	that $\TB$ has all its Z-eigenvalues in $[0.1,1]$ and is positive definite.  In this case, the randomly
  selected diagonal for $\M{D}$ is $\set{-0.8620, 0.8419, 0.7979,
    0.6948}$.
  The random $\T{B}$ is then rounded to four decimal places, and the entries are  given in \App{entries} (\Fig{random_bdef}).  Its minimum Z-eigenvalue (computed
  by GEAP) is $0.1125 = 0.6948^6$, as expected.
  There are a total of 26 real-valued $\TB$-eigenpairs of $\TA$,
  listed in \App{eigs} (\Tab{random_eigs}). There are three maxima and four minima.
\end{exampleplusname}

\Tab{random_results} shows the generalized eigenpairs calculated by \Alg{geap}. 
The error is computed as $\| \VAxm - \lambda \VBxm \|_2$. 
With 1000 random starts, we find the three local maxima with $\beta=1$. Likewise, with 1000 random starts, we find the four local minima with $\beta=-1$. There are only small violations to monotonicity; the maximum of any violation was O($10^{-14}$).

\begin{table}[htbp]
  \setlength{\tabcolsep}{2.5pt}
  \centering\footnotesize
  \caption{Generalized eigenpairs for $\TA,\TB\in\Smn[6,4]$ from \Ex{random}}
  \label{tab:random_results}

 \begin{ResultsSubTable}{$\beta=1$}{4}
683 & 11.3476 &  0.4064 &  0.2313 &  0.8810 &  0.0716 &   59 & 420 & 4e-15 & 5e-09 & 4e-09 & 0.14 & 0.03 \\ \hline 
 128 &  3.7394 &  0.2185 & -0.9142 &  0.2197 & -0.2613 &  140 &  11 & 2e-15 & 1e-08 & 3e-09 & 0.30 & 0.04 \\ \hline 
 189 &  2.9979 &  0.8224 &  0.4083 & -0.0174 & -0.3958 &   23 &   9 & 1e-15 & 3e-09 & 1e-09 & 0.06 & 0.01 \\ \hline 
 \end{ResultsSubTable}

 \begin{ResultsSubTable}{$\beta=-1$}{4}
 151 & -1.1507 &  0.1935 &  0.5444 &  0.2991 & -0.7594 &   88 & \multicolumn{2}{c|}{--} & 8e-09 & 8e-10 & 0.19 & 0.02 \\ \hline 
 226 & -3.2777 &  0.6888 & -0.6272 & -0.2914 & -0.2174 &   33 &  14 & 1e-15 & 6e-09 & 2e-09 & 0.08 & 0.01 \\ \hline 
 140 & -3.5998 &  0.7899 &  0.4554 &  0.2814 &  0.2991 &   22 &  21 & 1e-15 & 2e-09 & 1e-09 & 0.05 & 0.01 \\ \hline 
 483 & -6.3985 &  0.0733 &  0.1345 &  0.3877 &  0.9090 &   82 &  73 & 2e-15 & 9e-09 & 3e-09 & 0.17 & 0.03 \\ \hline 
 \end{ResultsSubTable}
\end{table}

\section{Related work}
\label{sec:related}

Like its predecessor SS-HOPM \cite{KoMa11}, the GEAP method has the desirable
qualities of guaranteed convergence and simple
implementation. Additionally, the adaptive choice of $\alpha$ in GEAP
(as opposed to SS-HOPM) means that there are no parameters for the
user to specify.

Also like SS-HOPM, the GEAP method can only converge to local maxima
and minima of \Eqn{opt} and so will miss any saddle point solutions.
Nevertheless, the largest and smallest magnitude eigenvalues can
always be discovered by GEAP since they will not be saddle points.

\subsection{Numerical Optimization Approaches}
An alternative to GEAP is to solve \Eqn{gtep} or \Eqn{opt} using a
numerical nonlinear, homotopy, or optimization approach. The advantage
of GEAP is that is guarantees decrease at each iteration
\emph{without} any globalization techniques (like line search or trust
region) and is generally as cheap or cheaper per iteration than any
competing numerical method. The disadvantage is that the rate of
convergence of GEAP is only linear, as opposed to quadratic for, say,
Newton's method.

Han \cite{Ha12} proposed an unconstrained variations principle for finding generalized eigenpairs. In the general case, the function to be optimized is
\begin{equation}
  \label{eq:han}
  f(\Vx) = \frac{(\SBxm)^2}{2m} - \beta \frac{\SAxm}{m}.
\end{equation}
The $\beta$ has the same meaning as for GEAP: choosing $\beta=1$ finds
local maxima and $\beta=-1$ finds local minima.  For comparison, the final solution is rescaled as $\Vx = \Vx / \sqrt[m]{\SBxm}$, and then we calculate $\lambda = \SAxm$ (since $\SBxm = 1$).

The computational experiment settings are the same as specified in
\Sec{experiments}. Han used the MATLAB Optimization
Toolbox, and we use Version 2.6.1. Folliwng Han, we use the \texttt{fminunc}
function and the default settings from calling
\texttt{optimset('fminunc')} except that we explicitly specify
\begin{itemize}
\item \texttt{GradObj:on}
\item \texttt{LargeScale:off}
\item \texttt{TolX:1e-10}
\item \texttt{TolFun:1e-8}
\item \texttt{MaxIter:10000}
\item \texttt{Display:off}
\end{itemize}
This means that the toolbox uses a quasi-Newton method with a line
search which should have superlinear convergence.

The results of Han's method for \Ex{random} are shown in \Tab{han_random_results}. 
For each eigenpair, the table lists the number of occurrences in the 1000
experiments, the median number of function evaluations (fevals) until convergence, 
the average error and standard deviation in the final result, and the average run time and standard deviation.
The error is computed as $\| \VAxm - \lambda \VBxm \|_2$; both methods achieved comparable errors.

\begin{table}[htbp]
  \setlength{\tabcolsep}{2.5pt}
  \centering\footnotesize
  \caption{Generalized eigenpairs from Han's method for $\TA,\TB\in\Smn[6,4]$ from \Ex{random}}
  \label{tab:han_random_results}

 \begin{HanResultsSubTable}{$\beta=1$}{4}
 718 & 11.3476 &  0.5544 &  0.3155 &  1.2018 &  0.0977 &   45 & 1e-08 & 2e-08 & 0.17 & 0.06 \\ \hline 
 134 &  3.7394 &  0.2642 & -1.1056 &  0.2657 & -0.3160 &   31 & 4e-09 & 7e-09 & 0.12 & 0.05 \\ \hline 
 144 &  2.9979 &  1.0008 &  0.4969 & -0.0212 & -0.4817 &   31 & 4e-09 & 5e-09 & 0.12 & 0.05 \\ \hline 
   4 & \multicolumn{8}{c|}{\emph{--- Failed to converge ---}} & 0.21 & 0.10 \\ \hline 
 \end{HanResultsSubTable}

 \begin{HanResultsSubTable}{$\beta=-1$}{4}
  72 & -1.1507 &  0.2291 &  0.6444 &  0.3540 & -0.8990 &   34 & 9e-09 & 3e-08 & 0.14 & 0.06 \\ \hline 
 150 & -3.2777 &  0.8349 & -0.7603 & -0.3532 & -0.2635 &   33 & 5e-09 & 7e-09 & 0.14 & 0.07 \\ \hline 
 148 & -3.5998 &  1.0486 &  0.6046 &  0.3736 &  0.3971 &   41 & 6e-09 & 8e-09 & 0.16 & 0.08 \\ \hline 
 624 & -6.3985 &  0.1003 &  0.1840 &  0.5305 &  1.2438 &   48 & 7e-09 & 1e-08 & 0.19 & 0.08 \\ \hline 
   4 & \multicolumn{8}{c|}{\emph{--- Converged to wrong solution ---}} & 0.10 & 0.11 \\ \hline 
   2 & \multicolumn{8}{c|}{\emph{--- Failed to converge ---}} & 0.23 & 0.02 \\ \hline 
 \end{HanResultsSubTable}
\end{table}

For $\beta=1$, Han's method finds all three local maxima, though it fails to converge within 10,000 iterations for four starting points. There are no consistent results with respect to time. Han's method is faster than GEAP for $\lambda=3.7394$ but slower for the other two eigenpairs. This is consistent if we compare the number of function evaluations and the number of iterations for GEAP, which are measuring comparable amounts of work.

For $\beta=-1$, Han's method finds all four local minima, but it fails to converge for two starting points and converges to wrong solutions for four starting points. In those four cases, it terminated because the gradient was small (flag = 1) for three cases and the fourth it stopped because it could no improve the function value (flag = 5). In this case, GEAP was faster on average for all eigenpairs.

In general, Han's method represents an alternative approach to solving the generalized tensor eigenpair problem. In \cite{Ha12}, Han's method was compared to SS-HOPM with a fixed shift (for Z-eigenpairs only) and was superior. However, GEAP is usually as fast as Han's method and perhaps a little more robust in terms of its convergence behavior. The speed being similar is thanks to the adaptive shift in GEAP. It may be that Han's method could avoid problems of converging to incorrect solutions with tighter tolerances, but then the speed would be slower.

\subsection{Other Related Work}

Since \Eqn{gtep} is a polynomial system of equations, we can also
consider a polynomial solver approach. This does not scale to larger
problems and may be slow even for small problems. Nevertheless, it
finds \emph{all} solutions (even saddle points). We have used the Gr\"obner basis polynomial solver
\texttt{NSolve} in Mathematica to compute the full set of solutions
for the problems discussed in this paper.

In terms of methods specifically geared to tensor eigenvalues, most
work has focused on computing the largest H-eigenvalue for a
\emph{nonnegative} tensor: \cite{NgQiZh09,LiZhIb10}. The method of
Liu, Zhou, and Ibrahim \cite{LiZhIb10} is guaranteed to always find
the largest eigenvalue and also uses a ``shift'' approach.

\section{Conclusions}
\label{sec:conclusions}

The paper has proposed two improvements to the SS-HOPM method
\cite{KoMa11}. First, we have adapted the method to the
\emph{generalized} tensor eigenproblem. Second, we have proposed a
method for adaptively and automatically selecting the shift,
overcoming a major problem with the SS-HOPM method because choosing
the shift too large dramatically slows convergence whereas choosing it
too small can cause the method to fail completely.

We have tested our method numerically on several problems from the
literature, including computing of Z-, H-, and D-eigenpairs. We have
also proposed a novel method for generating random symmetric positive
definite tensors.

As this paper was in review, a new method has been proposed to compute \emph{all} real general eigenvalues using Jacobian semidefinite programming relaxations \cite{CuDaNi14}. Comparing to this method will be a topic of future study.

\appendix

\section{Useful derivatives}
\label{sec:derivatives}
First, we consider the gradient and Hessian of the general function
\begin{displaymath}
  f\ParensX = \frac{\FuncA \FuncB}{\FuncC}.
\end{displaymath}
Let $\V{g}_i(\Vx)$ denote $\nabla f_i(\Vx)$. 
From matrix calculus, the gradient of $\Sfx$ is
\renewcommand{\ParensX}{}
\begin{displaymath}
  \V{g}\ParensX = 
  \Bigl(
  \FuncA \GradB 
  + \FuncB \GradA
  - \bigl( \FuncA \FuncB / \FuncC \bigr) \GradC
  \Bigr) 
  \Bigm/ 
  \FuncC.
\end{displaymath}
Here we have dropped the argument, $\Vx$, to simply the notation.
Let $\M{H}_i(\Vx)$ denote $\nabla^2 f_i(\Vx)$. 
The Hessian of $\Sfx$ is
\begin{multline*}
  \M{H} = 
  \frac{\FuncA \FuncB}{\FuncC^3}(\Msop{\GradC}{\GradC})
  + \frac{1}{\FuncC}
  \Bigl[ \FuncB \HessA + \FuncA \HessB + (\Msop{\GradA}{\GradB}) \Bigr]\\
  -\frac{1}{\FuncC^2}
  \Bigl[
  \FuncA \FuncB \HessC
  + \FuncB (\Msop{\GradA}{\GradC}) 
  + \FuncA (\Msop{\GradB}{\GradC}) 
  \Bigr].
\end{multline*}
Now we specialize $\Sfx$ to \Eqn{opt}:
let
$\FuncA = \SAxm$, $\FuncB = \Snxm$, and $\FuncC=\SBxm$.
The following derivatives are proved in \cite{KoMa11}:
\begin{align*}
  \GradA &= m\VAxm, &
  \HessA &= m(m-1)\MAxm, \\
  \GradB & = m \Snxm[m-2] \Vx, &
  \HessB & = m \Snxm[m-2] \M{I} + m(m-2) \Snxm[m-4] \Mxxt, \\
  \GradC &= m \VBxm, &
  \HessC &= m(m-1) \MBxm.
\end{align*}
We need only consider the case for $\Vx \in \Sigma$, so we may assume
\begin{align*}
  \FuncB &= 1, &
  \GradB &= m \Vx, &
  \HessB &= m \bigl( \M{I} + (m-2)\Mxxt \bigr).
\end{align*}
Putting everything together, we have for $\Vx \in \Sigma$,
\begin{displaymath}
  \Vgx 
  = \frac{m}{\SBxm} 
  \Biggl[
  \bigl( \SAxm \bigr) \; \Vx 
  \;+\; \VAxm
  \;-\; \biggl( \frac{\SAxm}{\SBxm} \biggr) \; \VBxm 
  \Biggr].
\end{displaymath}
For the Hessian, assuming $\Vx\in \Sigma$, we have
\begin{multline*}
  \MHx = 
  \frac{m^2 \SAxm}{(\SBxm)^3} \bigl( \Msop{\VBxm}{\VBxm} \bigr) \\
  + \frac{m}{\SBxm} 
  \biggl[
  (m-1) \MAxm 
  + \SAxm \bigl( \M{I} + (m-2) \Mxxt \bigr) 
  + m \bigl( \Msop{\VAxm}{\Vx} \bigr)
  \biggr] \\
  - \frac{m}{(\SBxm)^2} 
  \biggl[
  (m-1) \SAxm \TB\Vx^{m-2}
  + m \bigl( \Msop{\VAxm}{\VBxm} \bigr)
  + m \SAxm \bigl( \Msop{\Vx}{\VBxm} \bigr)
  \biggr].
\end{multline*}

\FloatBarrier
\section{Tensor specifications}
\label{sec:entries}
The tensor for \Ex{KoRe02ex1} comes from Example 1 in Kofidis and Regalia \cite{KoRe02} and is specified in \Fig{KoRe02}.

\begin{figure}[htbp]\footnotesize
  \centering
  \begin{align*}
    a_{1111} &=  \phantom{-}0.2883, & 
    a_{1112} &= -0.0031, & 
    a_{1113} &=  \phantom{-}0.1973, & 
    a_{1122} &= -0.2485,\\
    a_{1123} &= -0.2939, & 
    a_{1133} &=  \phantom{-}0.3847, & 
    a_{1222} &=  \phantom{-}0.2972, & 
    a_{1223} &=  \phantom{-}0.1862,\\
    a_{1233} &=  \phantom{-}0.0919, & 
    a_{1333} &= -0.3619, & 
    a_{2222} &=  \phantom{-}0.1241, & 
    a_{2223} &= -0.3420,\\
    a_{2233} &=  \phantom{-}0.2127, & 
    a_{2333} &=  \phantom{-}0.2727, & 
    a_{3333} &= -0.3054.
  \end{align*}
  \caption{$\T{A}$ from Kofidis and Regalia \cite{KoRe02}, used in \Ex{KoRe02ex1}}
  \label{fig:KoRe02}
\end{figure}

\FloatBarrier
The tensor $\T{A}$ used for \Ex{heig} is randomly generated as described in \Sec{heig}; its entries are specified in \Fig{random_adef}.%

\begin{figure}[htbp]\footnotesize
  \centering
\begin{align*}
a_{111111} &= \phantom{-} 0.2888, & 
a_{111112} &=           -0.0013, & 
a_{111113} &=           -0.1422, & 
a_{111114} &=           -0.0323,\\
a_{111122} &=           -0.1079, & 
a_{111123} &=           -0.0899, & 
a_{111124} &=           -0.2487, & 
a_{111133} &= \phantom{-} 0.0231,\\
a_{111134} &=           -0.0106, & 
a_{111144} &= \phantom{-} 0.0740, & 
a_{111222} &= \phantom{-} 0.1490, & 
a_{111223} &= \phantom{-} 0.0527,\\
a_{111224} &=           -0.0710, & 
a_{111233} &=           -0.1039, & 
a_{111234} &=           -0.0250, & 
a_{111244} &= \phantom{-} 0.0169,\\
a_{111333} &= \phantom{-} 0.2208, & 
a_{111334} &= \phantom{-} 0.0662, & 
a_{111344} &= \phantom{-} 0.0046, & 
a_{111444} &= \phantom{-} 0.0943,\\
a_{112222} &=           -0.1144, & 
a_{112223} &=           -0.1295, & 
a_{112224} &=           -0.0484, & 
a_{112233} &= \phantom{-} 0.0238,\\
a_{112234} &=           -0.0237, & 
a_{112244} &= \phantom{-} 0.0308, & 
a_{112333} &= \phantom{-} 0.0142, & 
a_{112334} &= \phantom{-} 0.0006,\\
a_{112344} &=           -0.0044, & 
a_{112444} &= \phantom{-} 0.0353, & 
a_{113333} &= \phantom{-} 0.0947, & 
a_{113334} &=           -0.0610,\\
a_{113344} &=           -0.0293, & 
a_{113444} &= \phantom{-} 0.0638, & 
a_{114444} &= \phantom{-} 0.2326, & 
a_{122222} &=           -0.2574,\\
a_{122223} &= \phantom{-} 0.1018, & 
a_{122224} &= \phantom{-} 0.0044, & 
a_{122233} &= \phantom{-} 0.0248, & 
a_{122234} &= \phantom{-} 0.0562,\\
a_{122244} &= \phantom{-} 0.0221, & 
a_{122333} &= \phantom{-} 0.0612, & 
a_{122334} &= \phantom{-} 0.0184, & 
a_{122344} &= \phantom{-} 0.0226,\\
a_{122444} &= \phantom{-} 0.0247, & 
a_{123333} &= \phantom{-} 0.0847, & 
a_{123334} &=           -0.0209, & 
a_{123344} &=           -0.0795,\\
a_{123444} &=           -0.0323, & 
a_{124444} &=           -0.0819, & 
a_{133333} &= \phantom{-} 0.5486, & 
a_{133334} &=           -0.0311,\\
a_{133344} &=           -0.0592, & 
a_{133444} &= \phantom{-} 0.0386, & 
a_{134444} &=           -0.0138, & 
a_{144444} &= \phantom{-} 0.0246,\\
a_{222222} &= \phantom{-} 0.9207, & 
a_{222223} &=           -0.0908, & 
a_{222224} &= \phantom{-} 0.0633, & 
a_{222233} &= \phantom{-} 0.1116,\\
a_{222234} &=           -0.0318, & 
a_{222244} &= \phantom{-} 0.1629, & 
a_{222333} &= \phantom{-} 0.1797, & 
a_{222334} &=           -0.0348,\\
a_{222344} &=           -0.0058, & 
a_{222444} &= \phantom{-} 0.1359, & 
a_{223333} &= \phantom{-} 0.0584, & 
a_{223334} &=           -0.0299,\\
a_{223344} &=           -0.0110, & 
a_{223444} &= \phantom{-} 0.1375, & 
a_{224444} &=           -0.1405, & 
a_{233333} &= \phantom{-} 0.3613,\\
a_{233334} &= \phantom{-} 0.0809, & 
a_{233344} &= \phantom{-} 0.0205, & 
a_{233444} &= \phantom{-} 0.0196, & 
a_{234444} &= \phantom{-} 0.0226,\\
a_{244444} &=           -0.2487, & 
a_{333333} &= \phantom{-} 0.6007, & 
a_{333334} &=           -0.0272, & 
a_{333344} &=           -0.1343,\\
a_{333444} &=           -0.0233, & 
a_{334444} &=           -0.0227, & 
a_{344444} &=           -0.3355, & 
a_{444444} &=           -0.5937. \\
\end{align*}
  \caption{$\TA$ from \Ex{heig} and \Ex{random}}
  \label{fig:random_adef}
\end{figure}

The tensor used in \Ex{deig}. The DKI tensor $\TA \in \Smn[4,3]$ (called $\T{W}$ in the original paper) is the
symmetric tensor defined by the unique elements shown in \Fig{deiga}.
The tensor $\T{B}$ is the symmetrized outer product of the matrix
$\M{D}$ with itself where
\begin{displaymath}
  \M{D} = 
  \begin{bmatrix}
    1.755  &  0.035  &  0.132 \\
    0.035  &  1.390  &  0.017 \\
    0.132  &  0.017  &  4.006 
  \end{bmatrix},    
\end{displaymath}
so $\TB$ is the tensor whose unique elements are given in \Fig{deigb}

\begin{figure}[htpb]
  \centering
  \begin{align*}
    a_{1111} &=  \phantom{-}0.4982, & 
    a_{1112} &= -0.0582, & 
    a_{1113} &= -1.1719, & 
    a_{1122} &=  \phantom{-}0.2236,\\
    a_{1123} &= -0.0171, & 
    a_{1133} &=  \phantom{-}0.4597, & 
    a_{1222} &=  \phantom{-}0.4880, & 
    a_{1223} &=  \phantom{-}0.1852,\\
    a_{1233} &= -0.4087, & 
    a_{1333} &=  \phantom{-}0.7639, & 
    a_{2222} &=  \phantom{-}0.0000, & 
    a_{2223} &= -0.6162,\\
    a_{2233} &=  \phantom{-}0.1519, & 
    a_{2333} &=  \phantom{-}0.7631, & 
    a_{3333} &=  \phantom{-}2.6311.
  \end{align*}
  \caption{$\TA$ from Qi, Wang, and Wu \cite{QiWaWu08}, used in \Ex{deig}}
  \label{fig:deiga}
\end{figure}
\begin{figure}[htpb]
  \centering
  \begin{align*}
    b_{1111} &= 3.0800, & 
    b_{1112} &= 0.0614, & 
    b_{1113} &= 0.2317, & 
    b_{1122} &= 0.8140,\\
    b_{1123} &= 0.0130, & 
    b_{1133} &= 2.3551, & 
    b_{1222} &= 0.0486, & 
    b_{1223} &= 0.0616,\\
    b_{1233} &= 0.0482, & 
    b_{1333} &= 0.5288, & 
    b_{2222} &= 1.9321, & 
    b_{2223} &= 0.0236,\\
    b_{2233} &= 1.8563, & 
    b_{2333} &= 0.0681, & 
    b_{3333} &= 16.0480.    
  \end{align*}
  \caption{$\TB$ from Qi, Wang, and Wu \cite{QiWaWu08}, used in \Ex{deig}}
  \label{fig:deigb}
\end{figure}
The tensor $\T{A}$ used in \Ex{random} is the same as is used in \Ex{heig} and specified in \Fig{random_adef}. The tensor $\T{B}$ is a random positive definite tensor from \Ex{random}; it entries are specified in \Fig{random_bdef}.
\begin{figure}[htbp]
  \centering
  \footnotesize
  \begin{align*}
b_{111111} &= \phantom{-} 0.2678, & 
b_{111112} &=           -0.0044, & 
b_{111113} &=           -0.0326, & 
b_{111114} &=           -0.0081,\\
b_{111122} &= \phantom{-} 0.0591, & 
b_{111123} &=           -0.0009, & 
b_{111124} &=           -0.0045, & 
b_{111133} &= \phantom{-} 0.0533,\\
b_{111134} &=           -0.0059, & 
b_{111144} &= \phantom{-} 0.0511, & 
b_{111222} &=           -0.0029, & 
b_{111223} &=           -0.0072,\\
b_{111224} &=           -0.0016, & 
b_{111233} &=           -0.0005, & 
b_{111234} &= \phantom{-} 0.0007, & 
b_{111244} &=           -0.0006,\\
b_{111333} &=           -0.0185, & 
b_{111334} &= \phantom{-} 0.0001, & 
b_{111344} &=           -0.0058, & 
b_{111444} &=           -0.0046,\\
b_{112222} &= \phantom{-} 0.0651, & 
b_{112223} &=           -0.0013, & 
b_{112224} &=           -0.0050, & 
b_{112233} &= \phantom{-} 0.0190,\\
b_{112234} &=           -0.0023, & 
b_{112244} &= \phantom{-} 0.0190, & 
b_{112333} &=           -0.0011, & 
b_{112334} &=           -0.0014,\\
b_{112344} &= \phantom{-}0.0000, & 
b_{112444} &=           -0.0043, & 
b_{113333} &= \phantom{-} 0.0498, & 
b_{113334} &=           -0.0061,\\
b_{113344} &= \phantom{-} 0.0169, & 
b_{113444} &=           -0.0060, & 
b_{114444} &= \phantom{-} 0.0486, & 
b_{122222} &=           -0.0054,\\
b_{122223} &=           -0.0078, & 
b_{122224} &=           -0.0016, & 
b_{122233} &=           -0.0006, & 
b_{122234} &= \phantom{-} 0.0008,\\
b_{122244} &=           -0.0006, & 
b_{122333} &=           -0.0067, & 
b_{122334} &= \phantom{-} 0.0001, & 
b_{122344} &=           -0.0022,\\
b_{122444} &=           -0.0016, & 
b_{123333} &=           -0.0002, & 
b_{123334} &= \phantom{-} 0.0006, & 
b_{123344} &=           -0.0002,\\
b_{123444} &= \phantom{-} 0.0006, & 
b_{124444} &=           -0.0003, & 
b_{133333} &=           -0.0286, & 
b_{133334} &= \phantom{-} 0.0017,\\
b_{133344} &=           -0.0056, & 
b_{133444} &= \phantom{-} 0.0001, & 
b_{134444} &=           -0.0051, & 
b_{144444} &=           -0.0073,\\
b_{222222} &= \phantom{-} 0.3585, & 
b_{222223} &=           -0.0082, & 
b_{222224} &=           -0.0279, & 
b_{222233} &= \phantom{-} 0.0610,\\
b_{222234} &=           -0.0076, & 
b_{222244} &= \phantom{-} 0.0636, & 
b_{222333} &=           -0.0042, & 
b_{222334} &=           -0.0044,\\
b_{222344} &=           -0.0002, & 
b_{222444} &=           -0.0145, & 
b_{223333} &= \phantom{-} 0.0518, & 
b_{223334} &=           -0.0067,\\
b_{223344} &= \phantom{-} 0.0184, & 
b_{223444} &=           -0.0069, & 
b_{224444} &= \phantom{-} 0.0549, & 
b_{233333} &=           -0.0059,\\
b_{233334} &=           -0.0034, & 
b_{233344} &=           -0.0002, & 
b_{233444} &=           -0.0039, & 
b_{234444} &= \phantom{-} 0.0010,\\
b_{244444} &=           -0.0208, & 
b_{333333} &= \phantom{-} 0.2192, & 
b_{333334} &=           -0.0294, & 
b_{333344} &= \phantom{-} 0.0477,\\
b_{333444} &=           -0.0181, & 
b_{334444} &= \phantom{-} 0.0485, & 
b_{344444} &=           -0.0304, & 
b_{444444} &= \phantom{-} 0.2305.    
  \end{align*}
  \caption{$\T{B}$ from \Ex{random}}
  \label{fig:random_bdef}
\end{figure}

\FloatBarrier
\section{Complete lists of real eigenpairs}
\label{sec:eigs}
A polynomial system solver (\texttt{NSolve}) using a Gr\"obner basis
method is available in Mathematica  and has been employed
to generate a complete list of eigenpairs for the examples
in this paper in Tables \ref{tab:KoRe02ex1_ep}--\ref{tab:random_eigs}.
\begin{table}[htbp]
  \caption{All Z-eigenpairs for $\TA\in\Smn[4,3]$ from \Ex{KoRe02ex1}}
  \label{tab:KoRe02ex1_ep}
  \centering
  \footnotesize
  \begin{tabular}{|R|V{2}|E{1}|c|}
    \hline
    \EpHeader{3}{2} \\ \hline
    -1.0954 &  0.5915 & -0.7467 & -0.3043 &  1.86 &  2.75 & Minima \\  \hline
    -0.5629 &  0.1762 & -0.1796 &  0.9678 &  1.63 &  2.38 & Minima \\  \hline
    -0.0451 &  0.7797 &  0.6135 &  0.1250 &  0.82 &  1.25 & Minima \\  \hline
     0.1735 &  0.3357 &  0.9073 &  0.2531 & -1.10 &  0.86 & Saddle \\  \hline
     0.2433 &  0.9895 &  0.0947 & -0.1088 & -1.19 &  1.46 & Saddle \\  \hline
     0.2628 &  0.1318 & -0.4425 & -0.8870 &  0.62 & -2.17 & Saddle \\  \hline
     0.2682 &  0.6099 &  0.4362 &  0.6616 & -1.18 &  0.79 & Saddle \\  \hline
     0.3633 &  0.2676 &  0.6447 &  0.7160 & -1.18 & -0.57 & Maxima \\  \hline
     0.5105 &  0.3598 & -0.7780 &  0.5150 &  0.59 & -2.34 & Saddle \\  \hline
     0.8169 &  0.8412 & -0.2635 &  0.4722 & -2.26 & -0.90 & Maxima \\  \hline
     0.8893 &  0.6672 &  0.2471 & -0.7027 & -1.85 & -0.89 & Maxima \\  \hline
  \end{tabular}
\end{table}   
\begin{table}[h]
  \caption{All H-eigenpairs for $\TA\in\Smn[6,4]$ from \Ex{heig}}
  \label{tab:heig_ep}
  \centering
  \footnotesize
  \begin{tabular}{|R|V{3}|E{2}|c|}
      \hline
      \EpHeader{4}{3} \\ \hline
$-10.7440$ & $ 0.4664$ & $ 0.4153$ & $-0.5880$ & $-0.5140$ & $75.69$ & $30.21$ & $41.28$ & Minima \\  \hline
$-8.3201$ & $ 0.5970$ & $-0.5816$ & $-0.4740$ & $-0.2842$ & $62.11$ & $28.56$ & $15.64$ & Minima \\  \hline
$-4.1781$ & $ 0.4397$ & $ 0.5139$ & $-0.5444$ & $ 0.4962$ & $ 5.67$ & $31.85$ & $21.21$ & Minima \\  \hline
$-3.7180$ & $ 0.6843$ & $ 0.5519$ & $ 0.3136$ & $ 0.3589$ & $26.89$ & $ 7.05$ & $12.50$ & Minima \\  \hline
$-3.3137$ & $ 0.5588$ & $ 0.4954$ & $-0.6348$ & $ 0.1986$ & $-4.83$ & $11.31$ & $17.73$ & Saddle \\  \hline
$-3.0892$ & $ 0.6418$ & $-0.2049$ & $-0.6594$ & $-0.3336$ & $-10.41$ & $22.10$ & $ 6.26$ & Saddle \\  \hline
$-2.9314$ & $ 0.3161$ & $ 0.5173$ & $ 0.4528$ & $-0.6537$ & $31.95$ & $ 6.88$ & $13.47$ & Minima \\  \hline
$-2.0437$ & $ 0.6637$ & $ 0.5911$ & $-0.2205$ & $ 0.4017$ & $15.87$ & $-4.81$ & $ 8.30$ & Saddle \\  \hline
$-1.3431$ & $ 0.0544$ & $ 0.4258$ & $ 0.0285$ & $ 0.9027$ & $ 4.40$ & $ 2.04$ & $-0.85$ & Saddle \\  \hline
$-1.0965$ & $ 0.5156$ & $ 0.3387$ & $ 0.4874$ & $ 0.6180$ & $24.09$ & $14.29$ & $-13.10$ & Saddle \\  \hline
$-1.0071$ & $ 0.2030$ & $ 0.5656$ & $-0.0975$ & $-0.7933$ & $-3.71$ & $ 4.13$ & $ 5.35$ & Saddle \\  \hline
$-0.3600$ & $ 0.6999$ & $-0.1882$ & $ 0.3292$ & $-0.6053$ & $ 9.74$ & $ 3.89$ & $-2.07$ & Saddle \\  \hline
$-0.3428$ & $ 0.3879$ & $-0.1700$ & $ 0.5174$ & $-0.7436$ & $-3.52$ & $ 6.07$ & $ 1.24$ & Saddle \\  \hline
$ 0.0073$ & $ 0.3068$ & $ 0.0539$ & $ 0.3127$ & $-0.8973$ & $-2.92$ & $-1.29$ & $ 1.22$ & Saddle \\  \hline
$ 0.1902$ & $ 0.9744$ & $-0.0316$ & $ 0.2013$ & $-0.0952$ & $-1.49$ & $ 2.17$ & $ 0.65$ & Saddle \\  \hline
$ 0.3947$ & $ 0.5416$ & $ 0.4650$ & $ 0.0708$ & $ 0.6967$ & $ 8.59$ & $-15.89$ & $-3.63$ & Saddle \\  \hline
$ 0.4679$ & $ 0.9613$ & $ 0.0442$ & $-0.2718$ & $ 0.0083$ & $ 1.32$ & $-1.33$ & $-1.73$ & Saddle \\  \hline
$ 0.5126$ & $ 0.4232$ & $-0.6781$ & $-0.2347$ & $ 0.5532$ & $-8.44$ & $ 9.45$ & $ 7.66$ & Saddle \\  \hline
$ 0.5236$ & $ 0.3092$ & $ 0.8725$ & $ 0.1389$ & $-0.3518$ & $-2.58$ & $ 1.68$ & $ 3.60$ & Saddle \\  \hline
$ 0.7573$ & $ 0.5830$ & $-0.2565$ & $-0.3076$ & $-0.7069$ & $ 1.86$ & $-5.35$ & $-14.39$ & Saddle \\  \hline
$ 0.8693$ & $ 0.2414$ & $ 0.8332$ & $-0.2479$ & $-0.4313$ & $ 3.48$ & $-3.31$ & $-2.38$ & Saddle \\  \hline
$ 0.9572$ & $ 0.1035$ & $-0.9754$ & $-0.1932$ & $-0.0221$ & $-2.05$ & $ 0.83$ & $ 1.80$ & Saddle \\  \hline
$ 1.1006$ & $ 0.2033$ & $-0.9035$ & $-0.1584$ & $ 0.3424$ & $ 2.10$ & $-2.38$ & $-1.15$ & Saddle \\  \hline
$ 2.3186$ & $ 0.1227$ & $-0.8044$ & $-0.0334$ & $-0.5804$ & $ 2.50$ & $-2.74$ & $-10.23$ & Saddle \\  \hline
$ 2.7045$ & $ 0.3618$ & $-0.5607$ & $-0.5723$ & $ 0.4766$ & $ 8.78$ & $-17.72$ & $-21.79$ & Saddle \\  \hline
$ 3.3889$ & $ 0.6320$ & $ 0.5549$ & $ 0.3596$ & $-0.4043$ & $16.59$ & $-25.41$ & $-17.68$ & Saddle \\  \hline
$ 3.9099$ & $ 0.6722$ & $-0.2683$ & $-0.1665$ & $ 0.6697$ & $-21.17$ & $-4.98$ & $ 5.01$ & Saddle \\  \hline
$ 4.8422$ & $ 0.5895$ & $-0.2640$ & $-0.4728$ & $ 0.5994$ & $-28.20$ & $-6.48$ & $-15.54$ & Maxima \\  \hline
$ 5.1757$ & $ 0.6513$ & $ 0.0021$ & $ 0.7550$ & $-0.0760$ & $-23.82$ & $ 3.66$ & $-3.35$ & Saddle \\  \hline
$ 5.8493$ & $ 0.6528$ & $ 0.5607$ & $-0.0627$ & $-0.5055$ & $-34.20$ & $-22.87$ & $-9.58$ & Maxima \\  \hline
$ 8.7371$ & $ 0.4837$ & $ 0.5502$ & $ 0.6671$ & $-0.1354$ & $-7.66$ & $-19.48$ & $-43.93$ & Maxima \\  \hline
$ 9.0223$ & $ 0.5927$ & $-0.5567$ & $ 0.5820$ & $-0.0047$ & $-58.03$ & $-28.84$ & $ 4.60$ & Saddle \\  \hline
$ 9.6386$ & $ 0.5342$ & $-0.5601$ & $ 0.5466$ & $-0.3197$ & $-64.78$ & $-41.13$ & $-9.04$ & Maxima \\  \hline
$14.6941$ & $ 0.5426$ & $-0.4853$ & $ 0.4760$ & $ 0.4936$ & $-94.14$ & $-61.11$ & $-54.81$ & Maxima \\  \hline
  \end{tabular}
\end{table}   

\begin{table}[htbp]
  \caption{All D-eigenpairs for $\TA\in\Smn[4,3]$ and $\M{D} \in \Smn[2,3]$ from \Ex{deig}}
  \label{tab:deig}
  \centering
  \footnotesize
  \begin{tabular}{|R|V{2}|E{1}|c|}
    \hline
    \EpHeader{3}{2} \\ \hline
    -0.3313 &  0.2309 & -0.7741 & -0.1509 &  1.02 &  2.11 & Minima \\  \hline
    -0.1242 &  0.6577 &  0.0712 &  0.2189 &  0.35 &  1.25 & Minima \\  \hline
    -0.0074 &  0.2161 &  0.3149 & -0.4485 &  0.36 &  0.46 & Minima \\  \hline
    0.0611 &  0.6113 & -0.4573 &  0.1181 & -0.63 &  1.14 & Saddle \\  \hline
    0.1039 &  0.3314 &  0.5239 &  0.3084 & -0.46 &  0.63 & Saddle \\  \hline
    0.2009 &  0.2440 & -0.1250 &  0.4601 & -0.32 &  0.07 & Saddle \\  \hline
    0.2056 &  0.1211 & -0.2367 & -0.4766 & -0.29 &  0.13 & Saddle \\  \hline
    0.2219 &  0.1143 &  0.1812 &  0.4773 & -0.08 & -0.20 & Maxima \\  \hline
    0.2431 &  0.0943 & -0.6840 &  0.2905 &  0.18 & -1.11 & Saddle \\  \hline
    0.2514 &  0.2485 & -0.5579 &  0.3363 & -0.14 & -0.71 & Maxima \\  \hline
    0.3827 &  0.6236 &  0.3954 & -0.1678 & -1.58 &  0.32 & Saddle \\  \hline
    0.4359 &  0.4336 &  0.6714 & -0.0949 & -0.43 & -1.64 & Maxima \\  \hline
    0.5356 &  0.6638 & -0.1123 & -0.2537 & -0.48 & -1.43 & Maxima \\  \hline
  \end{tabular}
\end{table}      

\begin{table}[htbp]\footnotesize
  \centering
  \caption{All generalized tensor eigenpairs for $\TA\in\Smn[6,4]$ 
    and $\TB\in\Smn[6,4]_+$ from \Ex{random}}
  \label{tab:random_eigs}
  \begin{tabular}{|R|V{3}|E{2}|c|}
      \hline
      \EpHeader{4}{3} \\ \hline
-6.3985 &  0.0733 &  0.1345 &  0.3877 &  0.9090 & 20.43 &  4.93 & 11.20 & Minima \\  \hline
-3.5998 &  0.7899 &  0.4554 &  0.2814 &  0.2991 &  8.05 & 10.39 & 12.41 & Minima \\  \hline
-3.2777 &  0.6888 & -0.6272 & -0.2914 & -0.2174 &  8.27 &  3.65 &  5.95 & Minima \\  \hline
-1.7537 &  0.6329 & -0.2966 & -0.6812 & -0.2180 & -4.25 &  3.00 &  5.56 & Saddle \\  \hline
-1.1507 &  0.1935 &  0.5444 &  0.2991 & -0.7594 &  0.73 &  3.54 &  4.20 & Minima \\  \hline
-1.0696 &  0.1372 &  0.5068 &  0.0665 & -0.8485 & -1.54 &  3.30 &  3.64 & Saddle \\  \hline
-1.0456 &  0.2365 &  0.4798 & -0.7212 &  0.4402 & -1.16 &  1.54 &  2.57 & Saddle \\  \hline
-0.7842 &  0.5409 &  0.3388 &  0.4698 &  0.6099 & 16.02 &  8.79 & -12.47 & Saddle \\  \hline
-0.7457 &  0.6348 &  0.5354 & -0.4388 &  0.3434 &  2.49 &  0.94 & -1.59 & Saddle \\  \hline
-0.2542 &  0.3900 & -0.1333 &  0.4946 & -0.7652 & -2.51 &  2.99 &  0.93 & Saddle \\  \hline
-0.2359 &  0.6956 & -0.1369 &  0.3550 & -0.6094 &  6.38 &  2.23 & -1.27 & Saddle \\  \hline
 0.0132 &  0.3064 &  0.0541 &  0.3111 & -0.8980 & -5.33 & -2.36 &  2.21 & Saddle \\  \hline
 0.1633 &  0.4278 & -0.6578 & -0.2545 &  0.5652 & -2.42 &  3.86 &  2.36 & Saddle \\  \hline
 0.3250 &  0.5265 &  0.4653 &  0.0927 &  0.7055 &  7.50 & -12.05 & -3.41 & Saddle \\  \hline
 0.5206 &  0.3738 & -0.4806 & -0.6066 &  0.5111 &  3.19 & -2.27 & -1.47 & Saddle \\  \hline
 0.5463 &  0.5157 & -0.3055 & -0.3313 & -0.7287 & -9.91 & -3.67 &  1.37 & Saddle \\  \hline
 0.5945 &  0.4015 &  0.8447 &  0.1782 & -0.3058 & -3.70 &  4.95 &  1.87 & Saddle \\  \hline
 0.6730 &  0.9634 & -0.0009 &  0.2396 & -0.1204 & -5.84 &  7.88 &  1.78 & Saddle \\  \hline
 0.8862 &  0.3559 &  0.8571 & -0.1675 & -0.3326 &  3.55 & -2.24 & -2.63 & Saddle \\  \hline
 1.2962 &  0.9849 &  0.0018 & -0.1681 &  0.0419 &  2.20 & -5.97 & -3.18 & Saddle \\  \hline
 1.4646 &  0.7396 &  0.4441 &  0.4009 & -0.3083 &  8.41 & -2.08 & -7.72 & Saddle \\  \hline
 2.9979 &  0.8224 &  0.4083 & -0.0174 & -0.3958 & -4.00 & -5.46 & -6.56 & Maxima \\  \hline
 3.5181 &  0.4494 & -0.7574 &  0.4502 & -0.1469 & -9.40 &  1.89 & -2.83 & Saddle \\  \hline
 3.6087 &  0.0340 & -0.8989 & -0.0373 & -0.4353 &  0.87 & -8.03 & -5.77 & Saddle \\  \hline
 3.7394 &  0.2185 & -0.9142 &  0.2197 & -0.2613 & -8.72 & -0.90 & -3.34 & Maxima \\  \hline
11.3476 &  0.4064 &  0.2313 &  0.8810 &  0.0716 & -7.20 & -18.98 & -21.53 & Maxima \\  \hline    
  \end{tabular}
\end{table}

\FloatBarrier
\bibliographystyle{siamdoi}

\begin{thebibliography}{10}

\bibitem{TTB}
{\sc Brett~W. Bader, Tamara~G. Kolda, et~al.}, {\em Matlab tensor toolbox
  version 2.5}, Jan. 2012, \url{http://www.sandia.gov/~tgkolda/TensorToolbox/}.

\bibitem{ChPeZh09}
{\sc K.~C. Chang, Kelly Pearson, and Tan Zhang}, {\em On eigenvalue problems of
  real symmetric tensors}, Journal of Mathematical Analysis and Applications,
  350 (2009), pp.~416--422,
  \href{http://dx.doi.org/10.1016/j.jmaa.2008.09.067}{\textsc{doi:}\nolinkurl{10.1016/j.jmaa.2008.09.067}}.

\bibitem{CuDaNi14}
{\sc Chun-Feng Cui, Yu-Hong Dai, and Jiawang Nie}, {\em All real eigenvalues of
  symmetric tensors}, Mar. 2014,
  \href{http://arxiv.org/abs/1403.3720}{arXiv:1403.3720}.

\bibitem{Ha12}
{\sc Lixing Han}, {\em An unconstrained optimization approach for finding real
  eigenvalues of even order symmetric tensors}, Numerical Algebra, Control and
  Optimization (NACO), 3 (2012), pp.~583--599,
  \href{http://dx.doi.org/10.3934/naco.2013.3.583}{\textsc{doi:}\nolinkurl{10.3934/naco.2013.3.583}}.

\bibitem{KoRe02}
{\sc Eleftherios Kofidis and Phillip~A. Regalia}, {\em On the best rank-1
  approximation of higher-order supersymmetric tensors}, SIAM Journal on Matrix
  Analysis and Applications, 23 (2002), pp.~863--884,
  \href{http://dx.doi.org/10.1137/S0895479801387413}{\textsc{doi:}\nolinkurl{10.1137/S0895479801387413}}.

\bibitem{KoMa11}
{\sc Tamara~G. Kolda and Jackson~R. Mayo}, {\em Shifted power method for
  computing tensor eigenpairs}, SIAM Journal on Matrix Analysis and
  Applications, 32 (2011), pp.~1095--1124,
  \href{http://dx.doi.org/10.1137/100801482}{\textsc{doi:}\nolinkurl{10.1137/100801482}}.

\bibitem{Li05}
{\sc Lek-Heng Lim}, {\em Singular values and eigenvalues of tensors: A
  variational approach}, in CAMSAP'05: Proceeding of the IEEE International
  Workshop on Computational Advances in Multi-Sensor Adaptive Processing, 2005,
  pp.~129--132,
  \href{http://dx.doi.org/10.1109/CAMAP.2005.1574201}{\textsc{doi:}\nolinkurl{10.1109/CAMAP.2005.1574201}}.

\bibitem{LiZhIb10}
{\sc Yongjun Liu, Guanglu Zhou, and Nur~Fadhilah Ibrahim}, {\em An always
  convergent algorithm for the largest eigenvalue of an irreducible nonnegative
  tensor}, Journal of Computational and Applied Mathematics, 235 (2010),
  pp.~286--292,
  \href{http://dx.doi.org/10.1016/j.cam.2010.06.002}{\textsc{doi:}\nolinkurl{10.1016/j.cam.2010.06.002}}.

\bibitem{NgQiZh09}
{\sc Michael Ng, Liqun Qi, and Guanglu Zhou}, {\em Finding the largest
  eigenvalue of a nonnegative tensor}, SIAM Journal on Matrix Analysis and
  Applications, 31 (2009), pp.~1090--1099,
  \href{http://dx.doi.org/10.1137/09074838X}{\textsc{doi:}\nolinkurl{10.1137/09074838X}}.

\bibitem{Qi05}
{\sc Liqun Qi}, {\em Eigenvalues of a real supersymmetric tensor}, Journal of
  Symbolic Computation, 40 (2005), pp.~1302--1324,
  \href{http://dx.doi.org/10.1016/j.jsc.2005.05.007}{\textsc{doi:}\nolinkurl{10.1016/j.jsc.2005.05.007}}.

\bibitem{QiWaWu08}
{\sc Liqun Qi, Yiju Wang, and Ed~X. Wu}, {\em D-eigenvalues of diffusion
  kurtosis tensors}, Journal of Computational and Applied Mathematics, 221
  (2008), pp.~150--157,
  \href{http://dx.doi.org/10.1016/j.cam.2007.10.012}{\textsc{doi:}\nolinkurl{10.1016/j.cam.2007.10.012}}.

\bibitem{ReKo03}
{\sc Phillip~A. Regalia and Eleftherios Kofidis}, {\em Monotonic convergence of
  fixed-point algorithms for {ICA}}, IEEE Transactions on Neural Networks, 14
  (2003), pp.~943-- 949,
  \href{http://dx.doi.org/10.1109/TNN.2003.813843}{\textsc{doi:}\nolinkurl{10.1109/TNN.2003.813843}}.

\bibitem{ScLoVaKoXX}
{\sc Martin~D. Schatz, Tze-Meng Low, Robert~A. {van de Geijn}, and Tamara~G.
  Kolda}, {\em Exploiting symmetry in tensors for high performance}, SIAM
  Journal on Scientific Computing,
  \href{http://arxiv.org/abs/1301.7744}{arXiv:1301.7744}.
\newblock accepted 2014-03-11.

\end{thebibliography}

 
\end{document}